\newcommand{\p}{\partial}
\newcommand{\ds}{\displaystyle}
\newtheorem{remark}{Remark}[section]
\title{Numerical analysis for saddle dynamics of some semilinear elliptic problems \thanks{Submitted to... \funding{This work was partially supported by the National Natural Science Foundation of China (No. 12225102, T2321001, 12288101, 12301555), the Natural Science Foundation of Shandong Province (ZR2025QB01), the Taishan Scholars Program of Shandong Province (No. tsqn202306083), the National Key R\&D Program of China (No. 2023YFA1008903).}}
}
\author{Lei Zhang\thanks{Beijing International Center for Mathematical Research, Center for Machine Learning Research, Center for Quantitative Biology, Peking University, Beijing 100871, China,
  (\email{zhangl@math.pku.edu.cn}).}
\and Xiangcheng Zheng\thanks{School of Mathematics, State Key Laboratory of Cryptography and Digital Economy Security, Shandong University, Jinan, 250100, China, 
  (\email{xzheng@sdu.edu.cn}).}
\and Shangqin Zhu\thanks{School of Mathematics, Shandong University, Jinan, 250100, China, 
  (\email{sqzhu@mail.sdu.edu.cn}).}}
\begin{document}
\nolinenumbers
\maketitle

\begin{abstract}
This work presents a numerical analysis of computing transition states of semilinear elliptic partial differential equations (PDEs) via the index-1 saddle dynamics, or equivalently, the gentlest ascent dynamics. To establish clear connections between saddle dynamics and numerical methods of PDEs, as well as improving their compatibility, we first propose the continuous-in-space formulation of saddle dynamics for semilinear elliptic problems. This formulation yields a parabolic system that converges to saddle points. We then analyze the well-posedness, $H^1$ stability and error estimates of semi- and fully-discrete finite element schemes. Significant efforts are devoted to addressing the coupling, gradient nonlinearity, nonlocality of the proposed parabolic system, and the impacts of retraction due to the norm constraint. The error estimate results demonstrate the accuracy and index-preservation of the discrete schemes.  
\end{abstract}

\begin{keywords}
multiple solutions, saddle point, transition state, saddle dynamics, semilinear elliptic equation, error estimate
\end{keywords}

\begin{MSCcodes}
35K45, 65M60
\end{MSCcodes}

\section{Introduction}
Computing multiple solutions of nonlinear partial differential equations (PDEs) is an important but challenging topic, cf. the review \cite{Zho}. The semilinear elliptic equation with some nonlinear function $f(u)$ 
\begin{equation}\label{elliptic}
	 \Delta u(x)+ f(u(x))=0,~x\in \Omega;~~  u=0,~x\in \p\Omega, 
\end{equation}
where $\Omega$ is a bounded domain in $\mathbb{R}^d$ ($1\leq d\leq 3$) with a  smooth boundary $\partial \Omega$, is a typical multiple solution problem \cite{Bao,Wan,Wan2}. Sophisticated investigations for this problem or its variants have been conducted, including the mountain pass method \cite{Cho}, homotopy method \cite{Hao}, high-linking algorithm \cite{Din},  search-extension method \cite{XieIMA}, the iterative minimization formulation \cite{Gao}, deflation algorithm \cite{Farr}, minimax-type methods \cite{Li2001,Li2002,LiJi,LiuXie,Liu2,Xie0,Xie,Yao} and dimer-type methods \cite{EZho,Dimer,Lev,Qua}. 

High-index saddle dynamics  \cite{YinSISC} is a representative dimer-type method for locating any-index stationary points and constructing solution landscapes,  with successful applications in various fields \cite{Shi,Shi2,wang2021modeling,Yin2020nucleation,YinPRL}.
This work focuses on the index-1 saddle dynamics (I-1 SD), which is also known as gentlest ascent dynamics \cite{EZho} (see \cite{GuZho} for the equivalence). This method locates transition states that connect different minimizers and thus attracts extensive attention.
 Specifically, given a twice Fr\'echet differentiable energy function $E(y)$ with the position variable $ y \in \mathbb{R}^l $ ($1\leq l\in\mathbb N$), a point $y^*$ is called a nondegenerate index-1 saddle point (or a transition state) of $E(y)$ if $\nabla E(y^*)=0$ and $\nabla^2 E(y^*)$ has only one eigenvalue with negative real part and no eigenvalue with vanishing real part. It is worth mentioning that such definition remains valid for  infinite-dimensional problems, as we will consider in this work. Then the I-1 SD of locating nondegenerate index-1 saddle points of $E$ takes the following form \cite{EZho,ZhaSISC}
\begin{equation}\label{SD}
\left\{\begin{aligned}
 & y_t= \mathcal{L}_v(F(y)),&\mathcal{L}_v(\phi):=\beta(\phi- 2v(v,\phi));\\[0.05in]
 & v_t= \mathcal{N}_v(\nabla F(x)v),&\mathcal{N}_v(\phi):=\gamma(\phi- v(v,\phi)),
    \end{aligned}
    \right.
\end{equation}
where $v\in \mathbb{R}^l $ is directional variable, $\beta ,\gamma >0$ are relaxation parameters,  $ F(y) = -\nabla E(y) $ and $ \nabla F(y) = -\nabla^2 E(y) $ are force and negative Hessian matrix, respectively, and $(\cdot,\cdot)$ denotes the inner product of two vectors. When using I-1 SD to search for multiple solutions of PDEs such as (\ref{elliptic}), the force $F$ in (\ref{SD}) is obtained from spatial discretization of the PDE. Consequently, the dimension $l$ of I-1 SD implicitly depends on the number of the degree of freedom of spatial discretization.

 There exist some recent works on numerical analysis for time-discretization methods of I-1 SD \cite{Z3,Z3CSIAM}, where the $F$ and $\nabla F$ are supposed to be globally Lipschitz continuous. This assumption eliminates  potential difficulties caused by unbounded operators such as $\Delta$ in PDE models and we could then focus the attention on time discretization analysis. However, rigorous numerical analysis considering space-time discretization for I-1 SD in computing multiple solutions of PDEs such as (\ref{elliptic}) remains untreated. Indeed, the dynamical system formulation of I-1 SD  prevents its connection with numerical methods of PDEs, although the $F$ in (\ref{SD}) could be generated from spatial discretization of PDE models. Furthermore, though the dimension of I-1 SD (\ref{SD}) for PDE models depends on spatial discretization, the index is an inherent property of the saddle point and should remain invariant. Whether the index could be preserved in numerical scheme is physically important, while the time discretization analysis could not answer this question.

To give a clearer perspective for connections between saddle dynamics and numerical methods of PDEs, we adopt a different approach from the conventional procedure of  ``first discretize PDEs, then invoke saddle dynamics''. Instead, we first formulate the continuous-in-space version of saddle dynamics for PDE problems and then apply suitable numerical discretization methods. One advantage of recovering the continuous-in-space saddle dynamics is that it is feasible to engage with appropriate spatial discretization methods according to the features of PDE models. This approach fully leverages existing research on numerical methods for PDEs and enhances their compatibility with saddle dynamics when computing multiple solutions of PDEs.

As a start of this idea, we consider problem (\ref{elliptic}), which implies  $F(u)=\Delta u+ f(u)$, 
 $\nabla F(u)v=\Delta v +f'(u)v $ and $(g,\tilde g):=\int_\Omega g(x)\tilde g(x)dx$ for the continuous-in-space case such that the continuation of (\ref{SD}) in space results in a coupled parabolic system
\begin{equation}\label{1order}
\begin{array}{l}
   u_t=\mathcal L_v\big(\Delta u +f(u)\big)=\beta(\Delta u +f(u)- 2v(v,\Delta u +f(u))),\\[0.1in] v_t=\mathcal N_v\big(\Delta v+f'(u)v\big)=\gamma(\Delta v+f'(u)v- v(v,\Delta v+f'(u)v)),
   \end{array}
\end{equation}
on $(x,t)\in \Omega\times\mathbb R^+$, equipped with the following initial and boundary conditions
$$u(x,0)=u_0(x),~~v(x,0)=v_0(x),~~x\in\Omega;~~u(x,t)=v(x,t)=0,~~x\in \p\Omega,~~t\geq 0. $$
For this continuous-in-space I-1 SD, we derive the following results:
\begin{itemize}
\item[$\blacktriangle$]  We prove that a point is an index-1 saddle point of the semilinear elliptic problem (\ref{elliptic}) if and only if it is a stationary point of the parabolic system (\ref{1order}) and the solutions to its linearized version at this point converge exponentially, which provides theoretical supports for the effectiveness of (\ref{1order}) in determining index-1 saddle points. 

\item[$\blacktriangle$] We prove the uniqueness, $H^1$ stability and error estimates of the solutions to the spatial semi-discrete scheme of the system (\ref{1order}), which in turn leads to the well approximation to the index-1 saddle point and provides an answer for the index-preservation issue of numerical methods for multiple solutions of semilinear elliptic problems (cf. Theorem \ref{thm33} and Corollary \ref{cor1}). 

\item[$\blacktriangle$] In fully-discrete scheme, a retraction is applied at each step to ensure the norm constraint $\|v\|_{L^2(\Omega)}=1$, which could avoid the failure of SD to locate saddle points (see \cite[Fig. 1]{MiaCSIAM} for an example). Several efforts are devoted to accommodate its impacts:
\begin{itemize}
 \item[$\bullet$] We first prove the existence, uniqueness and $H^1$ stability of numerical solutions. As the well-posedness proof and the  $H^1$ stability estimates are coupled, an induction procedure is adopted (cf. Theorem \ref{stable}). Furthermore, several techniques are utilized in $H^1$ stability estimates such as the high-order perturbation of the gradient of numerical solutions before and after normalization (cf.  (\ref{yx1}));
 \item[$\bullet$] We propose a normalized projection to show that the error could be perturbed by a cubic term (cf. Lemma \ref{li}), which is critical for error estimates. The truncation errors involving the normalized projection are analyzed, where the derivations are carefully carried out to avoid possible order reduction caused by, e.g. the gradient nonlinearity. 
 \item[$\bullet$] In error estimates, an induction procedure is adopted to account for the nonlinearity of the error equation. Different from the conventional induction, where the error equation is usually given a priori, the induction hypothesis in this work is used to justify the validity of the error splitting in Lemma \ref{li} such that the error equation is further modified to facilitate the induction. Again, the error estimates implies the well approximation to the index-1 saddle point and the index-preservation of fully-discrete scheme, cf. Theorem \ref{thm44} and Remark \ref{rem4}. 
 \end{itemize}
 \end{itemize}

The rest of the work is organized as follows: In Section 2 we perform mathematical analysis for the continuous-in-space I-1 SD (\ref{1order}). In Sections 3--4 we investigate semi- and fully-discrete finite element schemes, respectively. Numerical experiments are performed in Section 5 to substantiate the theoretical findings, and a concluding remark is presented in the last section. 

\section{Mathematical analysis}

Let $L^p(\Omega)$ and $W^{m,p}(\Omega)$ for $0\leq m\in\mathbb N$ and $1\leq p\leq \infty$ be standard Sobolev spaces equipped with standard norms \cite{Ada}. In particular, we set $H^m(\Omega):=W^{m,2}(\Omega)$ and $H^m_0(\Omega)$ denotes the closure of $C^\infty_0(\Omega)$, the space of infinitely differentiable functions with compact support in $\Omega$,  with respect to the norm $\|\cdot\|_{H^m(\Omega)}$. For a Banach space $\mathcal X$ and some $T>0$, the Bochner space
$L^p(0,T;\mathcal X)$ contain functions $g$ that are finite under the norm $\|g\|_{L^p(0,T;\mathcal X)}:=\|\|g\|_{\mathcal X}\|_{L^p(0,T)}$ \cite[Definition 1.2.15]{Hyt}.
Then the space $W^{m,p}(0, T;\mathcal X)$ contains functions that are finite under the  norm $\|u\|_{W^{m,p}(0, T;\mathcal X)}:= \sum_{k=0}^m  \|u^{(k)}_t\|_{L^p(0,T;\mathcal X)}$ \cite[Definition 2.5.4]{Hyt}. 
For simplicity, we denote $\|\cdot\|:=\|\cdot\|_{L^2(\Omega)}$ and omit $\Omega$ in notations of spatial norms.


 Now we turn to the analysis of the continuous-in-space I-1 SD (\ref{1order}). Following \cite{YinSISC}, the initial value of $v$ is selected such that $\|v_0\|=1$. Suppose $v\in H^2(\Omega)$ and $f'(u)\in L^2(\Omega)$ such that the dynamics of $v$ in (\ref{1order}) is well defined under the $L^2$ sense. Then
\begin{equation*}
		\frac{d}{dt}\big(\|v\|^2-1\big) =2(v_t, v)=2\big(\mathcal N_v\big(\Delta v+f'(u)v\big),v\big) =2 \gamma\big(1-\|v\|^2\big)\big(\Delta v+f'(u)v, v\big).
\end{equation*}
 Let $\rho(t)=\|v\|^2-1$ and $g(t)=-2\gamma\big(\Delta v+f'(u)v, v\big)$ such that the above equation becomes $\displaystyle \frac{d}{dt}\rho(t)=\rho(t)g(t)$, that is, $\rho(t)=\rho(0)\exp\big(\int _0^tg(\tau)d\tau\big)$. Since $\rho(0)=0$, then $\rho(t)\equiv 0$ for $t\geq 0$, that is, the I-1 SD (\ref{1order}) has a norm constraint $\|v\|=1$ for $t\geq 0$.

Now we will indicate the effectiveness of (\ref{1order}) in locating index-1 saddle points by the following theorem.
\begin{theorem}\label{thm21}
Assume that the points $(u^*,{v^*_1})\subset H^2(\Omega)\cap H^1_0(\Omega)$ satisfy $\|v^*_1\|=1$ and the operator $-\Delta-f'(u^*)$ under zero Dirichlet boundary conditions has non-zero eigenvalues $\lambda_1^*\leq  \lambda_2^*\leq \lambda_{3}^*\leq \cdots$.  Then the following two statements are equivalent:
	\begin{itemize}
	\item[(a)] $(u^*, v^*_1)$ is a stationary point of (\ref{1order}) and the linearized problem 
	\begin{equation}\label{li1}
   \tilde u_t=\mathcal L_{v_1^*}\big(\Delta\tilde u +f(u^*)+f'(u^*)(\tilde u-u^*)\big),
\end{equation}
	 with an initial condition and zero Dirichlet boundary conditions, admits exponential convergence $\|\tilde u-u^*\|\sim e^{-\sigma t}$ for some $\sigma >0$ and for $t$ large enough; 
	 \item[(b)] $u^*$ is an index-1 saddle point of (\ref{elliptic}) and $v^*_1$ is an eigenfunction of the operator $-\Delta-f'(u^*)$ under zero Dirichlet boundary conditions, with the corresponding eigenvalue $\lambda_1^*<0$.
	\end{itemize}
\end{theorem}
\begin{proof}
	Suppose (a) holds, then $\mathcal L_{v_1^*}\big(\Delta u^* +f(u^*)\big)=0$. As  $\|\mathcal L_{v_1^*}\big(\Delta u^* +f(u^*)\big)\|=\beta \|\Delta u^* +f(u^*)\|$, then $u^*$ is a stationary point of (\ref{elliptic}). As $\mathcal{N}_{v_1^*}(\Delta v^*_1+ f'(u^*)v^*_1)=0$, we have $(\Delta +f'(u^*))v^*_1=\mu^*_1v^*_1$ where $\mu^*_1=(v^*_1, \Delta v^*_1+f'(u^*)v^*_1)$, that is, $(-\mu^*_1, v^*_1)$ is an eigenpair of $-\Delta -f'(u^*)$. Denote other eigenvalues of $-\Delta -f'(u^*)$ as $-\mu^*_2\leq  -\mu^*_3 \leq \cdots $ with corresponding eigenfunctions $\{v_i^*\}_{i=2}^\infty$ such that $\{v_i^*\}_{i=1}^\infty$ form an orthonormal basis of $L^2(\Omega)$. Define $\delta_u:=\tilde {u}-u^*$ such that (\ref{li1}) implies $\delta_{u, t}=\mathcal{L}_{v_1^*}\big(\Delta \delta_u+f'(u^*)\delta_u \big)$. Then we expand $\delta_u$ as $\delta_u=\sum_{i=1}^\infty \delta_{u,i}(t)v_i^*(x)$ such that 
\begin{equation}\label{for}
\begin{aligned}
\delta_{u,t}& =\sum_{i=1}^\infty \delta'_{u,i}(t)v_i^*(x)=\sum_{i=1}^\infty \delta_{u, i}(t)\mathcal{L}_{v_1^*}(\Delta v_i^*+f'(u^*)v_i^*)\\
&=\sum_{i=1}^\infty \delta_{u, i}(t)\mathcal{L}_{v_1^*}(\mu _i^* v_i^*)=-\beta\delta_{u,1}(t)\mu _1^* v_1^*+\beta\sum_{i=2}^{\infty}\delta_{u,i}(t)\mu _i^* v_i^*.
\end{aligned}
\end{equation} 
We then solve the ordinary differential equations of $\{\delta_{u, i}\}_{i=1}^\infty$ to obtain $\delta_{u,1}(t)=(\delta _u(0),v_1^*)e^{-\beta \mu_1^*t}$ and $\delta_{u,i}(t)=(\delta _u(0),v_i^*)e^{\beta \mu_i^*t}$  for $i\geq 2$. If $\mu_1^*\leq 0$, we could select $\delta_u(0)=v^*_1$ such that $\|\delta_{u}\|=e^{-\beta \mu_1^*t}$, a contradiction to the assumption $\|\delta_u\|\sim e^{-\sigma t}$. Thus we find $\mu_1^*>0$. Similarly we have $\mu_i^*<0$ for $i\geq 2$. Consequently, we conclude that $-\mu_i^*=\lambda_i^*$, for $i\geq 1$ and thus $u^*$ is an index-1 saddle point, leading to (b). 

If (b) holds, then we can immediately verify that $\mathcal L_{v_1^*}\big(\Delta u^* +f(u^*)\big)=0$ and $ \mathcal N_{v_1^*}\big(\Delta v_1^*+f'(u^*)v_1^*\big)=0$ by $(-\Delta -f'(u^*))v_1^*=\lambda_1^* v_1^*$ and $\|v_1^*\|=1$. Thus $(u^*, v_1^* )$ is a stationary state of (\ref{1order}). By similar derivations around (\ref{for}), we have $\delta_u=\sum_{i=1}^\infty \delta_{u,i}(t)v_i^*(x)$ with $\delta_{u,1}(t)=(\delta _u(0),v_1^*)e^{\beta \lambda_1^*t}$ and $\delta_{u,i}(t)=(\delta _u(0),v_i^*)e^{-\beta \lambda_i^*t}$  for $i\geq 2$. As $u^*$ is an index-1 saddle point, we have $\lambda_1^*<0$ and $\lambda_i^*>0$ for $i\geq 2$ such that
$\|\delta_u\|^2=\sum_{i=1}^\infty \|\delta_{u,i}\|^2\leq e^{2\beta\max\{\lambda_1^*,-\lambda_2^*\}t}\|\delta_u(0)\|^2, $
which implies (a).
\end{proof}

\begin{remark}
In Theorem \ref{thm21}, we have assumed that $u^*,v_1^*\in H^2(\Omega)$ since the semilinear elliptic equation (\ref{elliptic}), the parabolic system (\ref{1order}) and the eigenvalue problem of the operator $-\Delta-f'(u^*)$ are all considered under the $L^2$ sense such that  $u^*,v_1^*\in H^2(\Omega)$ is the minimal requirement to match the $L^2$ setting. 

To relax the regularity condition, we may consider problems under the weak formulation, which could reduce the requirement to $u^*\in H^1_0(\Omega)\cap L^\infty(\Omega)$ and $v_1^*\in H^1_0(\Omega)$. Specifically, if we consider the weak formulation of (\ref{elliptic}), i.e. $-(\nabla u,\nabla \chi)+(f(u),\chi)=0$ for any $\chi\in H^1_0(\Omega)$, then it suffices to consider its solution in $ H^1_0(\Omega)$ and thus assume $u^*\in H^1_0(\Omega)$. Then if
we intend to define the eigenvalue problem by $(\nabla v,\nabla \chi)-(f'(u^*)v,\chi)=\lambda(v,\chi)$ for any $\chi\in H^1_0(\Omega)$, the assumption $u^*\in L^\infty(\Omega)$ could imply $f'(u^*)\in L^\infty(\Omega)$ (if $f'$ is locally Lipschitz continuous) such that the weak formulation is well-defined and admits eigenfunctions in $H^1_0(\Omega)$. Thus it suffices to assume $v_1^*\in H^1_0(\Omega)$. Then we could consider the weak formulation of (\ref{1order}) for any $\chi_1,\chi_2\in H^1_0(\Omega)$
\begin{equation*}
\begin{array}{l}
   (u_t,\chi_1)=\beta\big[-(\nabla u,\nabla\chi_1) +(f(u),v)- 2(v,\chi_1)(-(\nabla u,\nabla v) +(f(u),v))\big],\\[0.1in] 
   (v_t,\chi_2)=\gamma\big[-(\nabla v,\nabla\chi_2)+(f'(u)v,\chi_2)- (v,\chi_2)(-(\nabla v,\nabla v)+(f'(u)v,v))\big],
   \end{array}
\end{equation*}
and re-prove Theorem \ref{thm21} under the weaker setting by the same procedure,  based on $u^*\in H^1_0(\Omega)\cap L^\infty(\Omega)$ and $v_1^*\in H^1_0(\Omega)$.
\end{remark}

 In this work, we make the following assumption on the nonlinear term $f$:

\textit{Assumption A}: $f$ and $f'$ are local Lipschitz functions on $\mathbb R$, that is, for any $r>0$ there exists a constant $L_r>0$ depending on $r$ such that 
\begin{align}\label{lip}
|f(z_1)-f(z_2)|+|f'(z_1)-f'(z_2)|\leq L_r|z_1-z_2|,~~\forall -r\leq z_1, z_2\leq r.
\end{align}
Note that (\ref{lip}) implies the boundedness of $f(z)$ and $f'(z)$ for $-r\leq z\leq r$ with the bound depending on $r$ and $L_r$.
Furthermore, we use $Q$ to denote a generic positive constant that may assume different values at different occurrences, and use $M$, $\tilde Q$, $C_i$ and $Q_i$ with $0\leq i\in\mathbb N$ to denote fixed constants. All these constants may depend on $T$ and certain norms of solutions but are independent from parameters of numerical methods such as the time step size, the number of steps and the spatial mesh size. We will also omit the variable $x$ in functions, e.g. we denote $u(x,t)$ by $u(t)$.

In the rest of the work, we consider (\ref{1order}) on a finite interval $[0,T]$ since it is shown in Corollary \ref{cor1} and Remark \ref{rem4} that the numerical solution at $T$ could approximate the stationary solution $u^*(x)$ with an arbitrarily small error for $T$ large enough.

\section{Analysis of semi-discrete scheme}
\subsection{Numerical scheme}
Define a quasi-uniform partition of $\Omega$ with mesh size $h$, and let $S_h$
be the space of continuous and piecewise linear functions on $\Omega$ with respect to the
partition. The elliptic projection operator $P:  H^1_0\rightarrow S_{h}$ defined by
$\big(\nabla(g- P g), \nabla \chi \big)=0$ for any $\chi\in S_h$
  satisfies the following estimate \cite{Tho}
\begin{equation}\label{eta1}
	\|g- P g\|+h\|\nabla(g-P g)\|\leq Qh^2\|g\|_{H^2} \text{ for any } g\in H^2\cap H^1_0. 
\end{equation}
Furthermore, we have the inverse estimate \cite[Theorem 4.5.11]{Bre}
\begin{align}\label{inv}
\|g_h\|_{L^\infty}\leq Qh^{-\frac{d}{2}}\|g_h\|,~~\text{ for any }g_h\in S_h,
\end{align}
and the following interpolation error estimates hold for interpolation operator $I_h$ \cite[Theorem 4.4.20]{Bre}
\begin{align}\label{inter}
\|g-I_hg\|\leq Qh^2\|g\|_{H^2},~~\|g-I_hg\|_{L^\infty}\leq Qh^{2-\frac{d}{2}}\|g\|_{H^2},\text{ for any } g\in H^2.
\end{align}
To obtain the weak formulation of the problem, we compute the inner product of the equations in (\ref{1order}) with test functions $\chi_1, \chi_2 \in H^1_0$, respectively, to get
\begin{equation}\label{weak}
\left\{\begin{aligned}
&\beta^{-1}(u_{t}, \chi_1) \!+\! (\nabla u, \nabla \chi_1) \!-\!2(\nabla v,  \nabla u)(v,\chi_1) \!=\!\beta^{-1}(\mathcal L_{v}(f(u)),\chi_1), \\[0.05in]
&\gamma^{-1}(v_{t},\chi_2)\!+\!(\nabla v, \nabla \chi_2)\!-\!(\nabla v,\nabla v)(v,\chi_2) \!=\!\gamma^{-1}(\mathcal N_{v}(f'(u)v),\chi_2).
\end{aligned}
    \right.
\end{equation}
Then the semi-discrete approximation of (\ref{1order}) could be expressed as: find $u_h, v_h\in S_h$ such that the following relations hold for any $\chi_1,\chi_2 \in S_h$
\begin{equation}\label{approximation}
\left\{\begin{aligned}
&\beta^{-1}(u_{h,t}, \chi_1) +(\nabla u_h, \nabla \chi_1) -2(\nabla v_h,  \nabla u_h)(v_h,\chi_1)=\beta^{-1}(\mathcal L_{v_h}(f(u_h)),\chi_1),\\
&\gamma^{-1}(v_{h,t},\chi_2)+(\nabla v_h, \nabla \chi_2)-(\nabla v_h,\nabla v_h)(v_h,\chi_2)=\gamma^{-1}(\mathcal N_{v_h}(f'(u_h)v_h),\chi_2),
\end{aligned}
    \right.
\end{equation}
with $u_h(x,0)=u_{0,h}:=P u_0$ and $v_h(x,0)=v_{0,h}:=\frac{P v_0}{\|P v_0\|} $ such that $\|v_{0,h}\|=1$. By a similar derivation as for (\ref{1order}), $\|v_h\|=1$ for any $t\in [0,T]$. Now we show that $v_{0,h}$ is well approximation to $v_0\in H^2(\Omega)$. By (\ref{eta1}) and $v_{0,h}-Pv_0=\frac{P v_0}{\|P v_0\|}(1-\|Pv_0\|)$ we have 
$$\|v_{0,h}-v_0\|\leq \|v_{0,h}-P v_0\|+\|Pv_0-v_0\|= |\|P v_0\|-1|+\|Pv_0-v_0\|\leq 2\|P v_0 - v_0\|\leq Qh^2.$$ Furthermore, $\|v_0-Pv_0\|\leq Qh^2$ implies that $\|Pv_0\|\geq 1-Qh^2\geq 1/2$ for $h$ small enough, which in turn leads to 
\begin{align}\label{gv0}
\|\nabla v_{0,h}\|=\frac{\|\nabla Pv_0\|}{\|Pv_0\|}\leq 2\|\nabla Pv_0\|\leq 2\|\nabla v_0\|.
\end{align}

\subsection{Well-posedness issue} We derive stability estimates and uniqueness of the numerical solutions to the semi-discrete scheme (\ref{approximation}) under the condition below:

 \textit{Assumption B}:  There exists an $h_0>0$ such that the solution $u_h$ of (\ref{approximation}), if exists, satisfies 
$	\|u_h\|_{L^{\infty}(0,T; L^{\infty})}\leq C_0$ for any $0<h<h_0$ and for some fixed $C_0>0$. 
\begin{remark}
The Assumption B imposes the boundedness of the numerical solutions. Note that similar boundedness assumptions are often imposed in numerical studies of nonlinear PDEs, e.g. the three-dimensional Navier-Stokes equation \cite{Li}.
 
 If we consider proving the bound of $u_h$, we may first recall the semilinear parabolic equation $
 u_t=\Delta u+f(u)$, a similar but much simpler equation in comparison to the coupled parabolic system (\ref{1order}). For such equation, the boundedness of its semi-discrete numerical solutions is usually derived from that of the true solutions based on the error estimate,  see e.g. \cite{Lar}. Specifically, one first introduce an auxiliary semi-discrete scheme with a cutoff nonlinearity $\tilde f(\cdot)$, which equals to $f(\cdot)$ over some bounded interval $\tilde I$ (e.g. $\tilde I:=[-\|u\|_{L^\infty}-1,\|u\|_{L^\infty}+1]$) but is globally Lipschitz continuous such that the $L^2$ error estimate for numerical solutions $\tilde u_h$ of the auxiliary scheme can be derived as $\|\tilde u-u_h\|\leq Qh^2$. 
 Then we combine this with the interpolation estimates (\ref{inter}) and inverse estimate (\ref{inv}) to bound $\tilde u_h$ by
 \begin{align}
 \|\tilde u_h\|_{L^\infty}&\leq  \|\tilde u_h-I_hu\|_{L^\infty}+ \|I_hu-u\|_{L^\infty}+ \|u\|_{L^\infty}\nonumber\\
&\hspace{-0.2in} \leq Qh^{-\frac{d}{2}}\|\tilde u_h-I_hu\|+Qh^{2-\frac{d}{2}}\|u\|_{H^2}+\|u\|_{L^\infty}\label{linf}\\
& \hspace{-0.2in}\leq Qh^{-\frac{d}{2}}(\|\tilde u_h-u\|+\|u-I_hu\|)+Qh^{2-\frac{d}{2}}\|u\|_{H^2}+\|u\|_{L^\infty}\leq \|u\|_{L^\infty}+Qh^{2-\frac{d}{2}}.\nonumber
 \end{align}
As $2-\frac{d}{2}>0$ for $1\leq d\leq 3$,  $\tilde u_h\in\tilde I$ for $h $ small enough such that the auxiliary scheme is consistent with the original scheme and we immediately get $\tilde u_h=u_h$ and $ \|u_h\|_{L^\infty(0,T;L^\infty)}\leq  \|u\|_{L^\infty(0,T;L^\infty)}+1$ (i.e. the Assumption B).
 
 For the coupled parabolic system (\ref{1order}), however, the above idea could not be applied for its semi-discrete scheme (\ref{approximation}). Due to the gradient nonlinearity in the dynamics of $v$ and its coupling with the dynamics of $u$, the cutoff function in the auxiliary scheme should also include the gradient as a variable, and we thus need to estimate the error of the gradient and then accordingly bound the gradient to recover the original scheme. Assume we could ideally obtain $\|\nabla(\tilde v_h-v)\|\leq Qh$ ($\tilde v_h$ denotes the numerical solution of the auxiliary semi-discrete scheme for the dynamics of $v$), which is true and optimal for the semi-discrete scheme of linear parabolic equations, see e.g. \cite[Theorem 1.3]{Tho}.  Then a similar estimate as (\ref{linf}) for $\nabla \tilde v_h$ will generate an $O(h^{1-\frac{d}{2}})$ term, which is not an infinitesimal for $d=2,3$. This may lead to the failure of the recovery of the original scheme at least for $d=2,3$. 
 
 Based on the above discussions and attempts, it remains a  challenge to bound $u_h$ for the semi-discrete scheme (\ref{approximation}) due to the gradient nonlinearity and we thus impose the Assumption B for the sake of numerical analysis. Nevertheless, we will investigate how to relax or even prove the Assumption B in the future. 
\end{remark}

\begin{lemma}\label{lem0}
Under the Assumptions A--B, the numerical solutions of (\ref{approximation}) satisfy
$
	\| \nabla  u_h\|+\|\nabla v_h\| \leq Q
$ on $t\in [0,T]$ for $h<h_0$ given in Assumption B.
\end{lemma}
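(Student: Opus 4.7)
The plan is to obtain the two $H^1$ bounds by parabolic energy estimates, first for $v_h$ (which decouples from $u_h$ thanks to the norm constraint $\|v_h\|=1$ already established), and then for $u_h$ using the bound on $\|\nabla v_h\|$ as an input. Since Assumption B is an a priori conditional hypothesis, throughout the argument I may invoke Assumption A to treat $f(u_h)$ and $f'(u_h)$ as uniformly bounded in $L^\infty(0,T;L^\infty)$.

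For $v_h$, the natural test $\chi_2 = v_h$ is useless: it gives a tautology because $(v_h,v_{h,t}) = \tfrac{1}{2}\tfrac{d}{dt}\|v_h\|^2 = 0$ and $(\mathcal N_{v_h}(\phi),v_h) = 0$ whenever $\|v_h\|=1$. Instead I would test with $\chi_2 = v_{h,t}$ in the second line of (\ref{approximation}) to obtain
\begin{equation*}
\gamma^{-1}\|v_{h,t}\|^2 + \tfrac{1}{2}\tfrac{d}{dt}\|\nabla v_h\|^2 - \|\nabla v_h\|^2 (v_h,v_{h,t}) = \gamma^{-1}(\mathcal N_{v_h}(f'(u_h)v_h),v_{h,t}).
\end{equation*}
The cubic term drops because $(v_h,v_{h,t}) = \tfrac{1}{2}\tfrac{d}{dt}\|v_h\|^2 = 0$. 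Since $\|\mathcal N_{v_h}(\phi)\| \le 2\gamma\|\phi\|$ and $\|f'(u_h)v_h\| \le Q\|v_h\| = Q$, Young's inequality absorbs half of $\gamma^{-1}\|v_{h,t}\|^2$ into the LHS and yields $\tfrac{d}{dt}\|\nabla v_h\|^2 \le Q$. Integrating in time, together with the $H^1$-stability of the elliptic projection that controls $\|\nabla v_{h,0}\| = \|\nabla P v_0\| \le Q\|\nabla v_0\|$, gives $\|\nabla v_h\| \le Q$ on $[0,T]$.

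For $u_h$, I would test the first line of (\ref{approximation}) with $\chi_1 = u_{h,t}$ to get
\begin{equation*}
\beta^{-1}\|u_{h,t}\|^2 + \tfrac{1}{2}\tfrac{d}{dt}\|\nabla u_h\|^2 = 2(\nabla v_h,\nabla u_h)(v_h,u_{h,t}) + \beta^{-1}(\mathcal L_{v_h}(f(u_h)),u_{h,t}).
\end{equation*}
Using $\|v_h\| = 1$, the just-established bound $\|\nabla v_h\| \le Q$, the estimate $\|\mathcal L_{v_h}(\phi)\| \le 3\beta\|\phi\|$, and $\|f(u_h)\| \le Q$ from Assumptions A--B, Cauchy--Schwarz plus Young's inequality give
\begin{equation*}
\tfrac{d}{dt}\|\nabla u_h\|^2 + \beta^{-1}\|u_{h,t}\|^2 \le Q\|\nabla u_h\|^2 + Q,
\end{equation*}
and Gronwall's inequality, together with $\|\nabla u_{h,0}\| \le Q$ from (\ref{eta1}), closes the estimate.

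The main obstacle I anticipate is essentially structural rather than technical: the standard $L^2$-type energy test $\chi_2 = v_h$ is neutralized by the norm constraint and the nonlocal term $\|\nabla v_h\|^2(v_h,\chi_2)$, which forces testing against $v_{h,t}$ and makes the bound on $\|\nabla v_h\|$ a purely $H^1$ estimate from the outset. A secondary but important point is the order of operations: because the coupling term $-2(\nabla v_h,\nabla u_h)(v_h,\chi_1)$ places $\nabla v_h$ as a coefficient in the $u_h$ equation, the $v_h$ estimate must be completed before attempting the $u_h$ estimate, otherwise one would need a simultaneous Gronwall on the coupled quantity and a more delicate absorption argument.
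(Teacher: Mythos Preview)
Your proposal is correct and follows essentially the same approach as the paper: test the $v_h$-equation with $v_{h,t}$ (using $(v_h,v_{h,t})=0$ from the norm constraint to kill the nonlocal gradient term), integrate to bound $\|\nabla v_h\|$, then test the $u_h$-equation with $u_{h,t}$ and close via Gronwall. The order of operations and the role of Assumption~B in controlling $f(u_h)$, $f'(u_h)$ are exactly as in the paper's proof.
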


\begin{proof}
By {\it Assumption B} and (\ref{lip}), we have $|f'(u_h)|\leq Q$. We invoke this in the second equation of (\ref{approximation}) with $\chi_2=v_{h, t}$ and apply $\|v_h\|=1$ (such that $(v_h,v_{h,t})=0$)
and Young's inequality to get
\begin{equation*}
\displaystyle \gamma^{-1}\| v_{h,t}\|^2+\frac{1}{2}\frac{d}{dt}\|\nabla v_{h}\|^2  = (f'(u_h)v_h, v_{h, t}) \leq Q\| v_{h,t}\| \leq Q+\gamma^{-1}\|v_{h, t}\|^2,
\end{equation*}
which implies $\displaystyle \frac{d}{dt}\|\nabla v_{h}\| \leq Q$. We integrate this inequality from $0$ to $t$ and apply (\ref{gv0}) to obtain
$\|\nabla  v_{h}\| \leq \|\nabla  v_{0,h} \|+Q \leq Q(\|\nabla  v_0\|+1).$
We then apply this estimate,  $\|v_h\|=1$, and {\it Assumption B} in the first equation of (\ref{approximation}) with $\chi_1= u_{h,t}$ to get
\begin{equation*}
\begin{aligned}
\beta^{-1}\|u_{h, t}\|^2+\frac{d \|\nabla u_h\|^2 }{2dt}& =\!2(\nabla u_h,\!\nabla v_h)(v_h, u_{h, t})\!+ (f(u_h), u_{h, t})\!-2(v_h, f(u_h))(v_h, u_{h, t}) \\
& \leq Q\|\nabla  u_h\| \cdot\|u_{h, t}\|+Q\|u_{h, t}\| \leq Q\|\nabla u_h\|^2+Q+\beta^{-1}\|u_{h, t}\|^2,
	\end{aligned}
\end{equation*}
that is, 
$
\displaystyle  \frac{d}{dt}\|\nabla u_h\|^2  \leq  Q+Q\|\nabla u_h\|^2
$. An application of the Gronwall inequality gives
$
	\| \nabla u_h\|\leq Q+Q\| \nabla u_{0,h}\|\leq Q(1+\|\nabla u_0\|).
$
Thus the proof is completed.
\end{proof}

\begin{theorem}
Under the Assumptions A--B, the solution of  (\ref{approximation}) is unique for $h<h_0$ given in Assumption B.
\end{theorem}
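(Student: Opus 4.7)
Assume for contradiction that two solutions $(u_h^{(1)},v_h^{(1)})$ and $(u_h^{(2)},v_h^{(2)})$ of \eqref{approximation} share the same initial data, and set $U:=u_h^{(1)}-u_h^{(2)}$, $V:=v_h^{(1)}-v_h^{(2)}$ so that $U(0)=V(0)=0$. Subtracting the corresponding equations in \eqref{approximation} and testing with $\chi_1=U$ in the difference of the first equation and $\chi_2=V$ in the difference of the second, I would add the two identities and move all cubic-gradient and reaction-type contributions to the right, obtaining
\begin{equation*}
\frac{1}{2}\frac{d}{dt}\bigl(\beta^{-1}\|U\|^2+\gamma^{-1}\|V\|^2\bigr)+\|\nabla U\|^2+\|\nabla V\|^2=R_1+R_2,
\end{equation*}
where $R_1$ collects the differences of the nonlocal cubic terms $(\nabla v_h,\nabla u_h)(v_h,U)$ and $(\nabla v_h,\nabla v_h)(v_h,V)$, while $R_2$ collects the differences of $\beta^{-1}(\mathcal L_{v_h}(f(u_h)),U)$ and $\gamma^{-1}(\mathcal N_{v_h}(f'(u_h)v_h),V)$.

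For $R_1$, I would apply the standard add-and-subtract decomposition, producing pieces in which exactly one factor is linear in $U$, $V$, $\nabla U$, or $\nabla V$, while the remaining factors are controlled by $\|v_h^{(i)}\|=1$ and $\|\nabla u_h^{(i)}\|+\|\nabla v_h^{(i)}\|\leq Q$ from Lemma \ref{lem0}. An archetypal instance is
\begin{equation*}
(\nabla v_h^{(1)},\nabla u_h^{(1)})(v_h^{(1)},U)-(\nabla v_h^{(2)},\nabla u_h^{(2)})(v_h^{(2)},U)=(\nabla V,\nabla u_h^{(1)})(v_h^{(1)},U)+(\nabla v_h^{(2)},\nabla U)(v_h^{(1)},U)+(\nabla v_h^{(2)},\nabla u_h^{(2)})(V,U),
\end{equation*}
whose three pieces are bounded by $Q\|\nabla V\|\|U\|$, $Q\|\nabla U\|\|U\|$, and $Q\|V\|\|U\|$ respectively; Young's inequality with a small enough parameter absorbs the gradient-squared contributions into $\|\nabla U\|^2+\|\nabla V\|^2$ on the left and leaves a harmless $Q(\|U\|^2+\|V\|^2)$. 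For $R_2$, Assumption B gives the pointwise bound $|u_h^{(i)}|\leq C_0$, and \eqref{lip} yields $|f(u_h^{(1)})-f(u_h^{(2)})|+|f'(u_h^{(1)})-f'(u_h^{(2)})|\leq L_{C_0}|U|$; analogous add-and-subtract splittings of $\mathcal L_{v_h}(f(u_h))$ and $\mathcal N_{v_h}(f'(u_h)v_h)$ then generate only $L^2$ products controlled by $\|v_h^{(i)}\|=1$, $\|f(u_h^{(i)})\|_{L^\infty}+\|f'(u_h^{(i)})\|_{L^\infty}\leq Q$, and in the single cross term $([f'(u_h^{(1)})-f'(u_h^{(2)})]v_h^{(1)},V)$ by H\"older combined with the Sobolev embedding $H^1\hookrightarrow L^4$ (valid for $d\leq 3$) and Lemma \ref{lem0}; this yields an estimate of the form $Q(\|U\|^2+\|V\|^2)$ with at most a $Q\|\nabla U\|\|V\|$ piece that is absorbed by Young.

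Assembling the estimates yields $\frac{d}{dt}(\|U\|^2+\|V\|^2)\leq Q(\|U\|^2+\|V\|^2)$, and Gronwall's inequality with the vanishing initial data forces $U\equiv V\equiv 0$ on $[0,T]$, proving uniqueness. The main obstacle I anticipate is careful bookkeeping of the cubic gradient nonlinearities: each add-and-subtract piece must place the ``unknown'' gradient $\nabla U$ or $\nabla V$ next to a pure $L^2$ quantity (never another gradient factor), so that Young's inequality actually absorbs it; a subtler but routine point is that the pointwise difference $f'(u_h^{(1)})-f'(u_h^{(2)})$ in the $v$-equation must be handled through the $H^1\hookrightarrow L^4$ embedding rather than any $L^\infty$ control on $U$, which is unavailable.
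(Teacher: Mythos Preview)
Your approach is correct and in fact streamlines the paper's argument. The paper tests the two difference equations separately with $\mu_h=U$ and $\nu_h=V$, then applies the $\varepsilon$-inequality to each identity in isolation; because the $u$-difference equation produces a $Q\|\nabla V\|\,\|U\|$ piece and the $v$-difference equation a $Q\|\nabla U\|\,\|V\|$ piece, the cross-gradient squares $\|\nabla V\|^2$ and $\|\nabla U\|^2$ survive on the right-hand sides (equations~(\ref{epu1}) and~(\ref{epv1}) after absorption). To close the system the paper then tests again with $\mu_{h,t}$ and $\nu_{h,t}$, obtaining the additional $H^1$-level estimates (\ref{epu2}) and (\ref{epv2}), and finally applies Gronwall to the sum $\|U\|^2+\|\nabla U\|^2+\|V\|^2+\|\nabla V\|^2$. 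You instead add the two $L^2$-tested identities \emph{before} applying Young's inequality, so that $\|\nabla U\|^2+\|\nabla V\|^2$ is simultaneously available on the left and absorbs every cross-gradient contribution in a single stroke; this gives a Gronwall inequality for $\|U\|^2+\|V\|^2$ alone, with no need for the time-derivative tests. The paper's route costs two extra estimates but delivers $H^1$ uniqueness as a by-product; your route is shorter and fully adequate for the stated uniqueness claim. Your handling of the $f'$ cross term via $H^1\hookrightarrow L^4$ matches the paper's estimate~(\ref{epvb}) exactly.
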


\begin{proof}
Suppose the scheme (\ref{approximation}) admits two pairs of solutions $(u_h,v_h)$ and $(\bar u_h,\bar v_h)$ where $u_h$ and $\bar u_h$ satisfy {\it Assumption B}.  Let
 $\mu_h=u_h-\bar u_h$ and $\nu_h=v_h-\bar v_h$, which satisfy $\mu_h(0)=\nu_h(0)=0$.
By subtracting the equations of $u_h$ and $\bar u_h$, we obtain 
\begin{equation}\label{epu}
	\begin{aligned}
&\beta ^{-1}(\mu_{h, t}, \chi_1) +(\nabla \mu_h, \nabla  \chi_1)-2\big((\nabla u_h,\nabla  v_h)(v_h, \chi_1)-(\nabla \bar u_h,\nabla \bar v_h)(\bar v_h, \chi_1)\big)  \\[0.05in]
&\qquad =\big(f(u_h)- f(\bar u_h),\chi_1\big)-2\big((f(u_h), v_h)(v_h, \chi_1)-(f(\bar u_h), \bar v_h)(\bar v_h, \chi_1 )\big).
	\end{aligned}
\end{equation}
By setting $\chi_1= \mu_h$ in (\ref{epu}) and applying Lemma \ref{lem0}, we derive 
\begin{equation}
\begin{aligned}\label{epua}
&\big|(\nabla u_h,\nabla  v_h)(v_h,  \mu_h)-(\nabla \bar u_h,\nabla \bar v_h)(\bar v_h,  \mu_h)\big|=\big|(\nabla u_h -\nabla \bar u_h , \nabla  v_h)(v_h,  \mu_h)\\[0.05in]
&\qquad +(\nabla \bar u_h,\nabla v_h-\nabla \bar v_h)(v_h,  \mu_h)+(\nabla \bar u_h,\nabla \bar v_h)(v_h -\bar v_h, \mu_h )\big |\\[0.05in]
& \quad \leq Q\|\nabla  \mu_h\|\cdot \| \mu_h\|+Q\|\nabla \nu_h\|\cdot \| \mu_h\|+Q\|\nu_h\|\cdot \| \mu_h\|.
\end{aligned}
\end{equation}
Similarly, by invoking \textit{Assumption A}, we have
\begin{equation}
	\begin{aligned}\label{epub}
&\big|(f(u_h), v_h)(v_h,  \mu_h)-(f(\bar u_h), \bar v_h)(\bar v_h,  \mu_h)\big| \\[0.05in]
&\qquad=\big |(f(u_h)-f(\bar u_h), v_h)(v_h,  \mu_h) +(f(\bar u_h), v_h-\bar v_h)(v_h,  \mu_h)\\[0.05in]
&\quad\qquad +(f(\bar u_h), \bar v_h)(v_h-\bar v_h,  \mu_h)\big|\leq Q\| \mu_h\|^2+Q\| \nu_h\|\cdot \| \mu_h\|.
	\end{aligned}
\end{equation}
Invoke the above two estimates in (\ref{epu}) to get
\begin{equation}\label{epu1}
\begin{aligned}
\displaystyle \frac{d}{dt}\| \mu_h\|^2+2\beta \|\nabla  \mu_h\|^2 & \leq 
Q\big[\|\nabla  \mu_h\|\cdot \| \mu_h\|+\|\nabla \nu_h\| \cdot \| \mu_h\| +\| \nu_h\|\cdot \|\mu_h\| + \|\mu_h\|^2\big],
\end{aligned}
\end{equation}
which, together with the Young’s inequality, leads to
$\frac{d}{dt}\| \mu_h\|^2 \leq Q\| \mu_h\|^2+Q\| \nu_h\|^2+Q\|\nabla \nu_h\|^2. $
Then we set $\chi_1=\mu_{h, t}$ in (\ref{epu}) and apply a similar process as above to get 
\begin{equation}\label{epu2}
\begin{aligned}
\frac{d}{dt}\|\nabla \mu_h\|^2 \leq Q \| \mu_h\|^2+Q\| \nu_h\|^2+Q\|\nabla \nu_h\|^2+Q\|\nabla \mu_h\|^2.
\end{aligned}
\end{equation}
To estimate $\| \nu_h\|$, we subtract schemes of $v_h$ and $\bar v_h$ to get 
\begin{align}\label{epv}
& \gamma^{-1}(\nu_{h,t}, \chi_2) +(\nabla  \nu_h, \nabla \chi_2)-\big((\nabla v_h,\nabla v_h)(v_h,\chi_2)-(\nabla \bar v_h,\nabla \bar v_h)(\bar v_h,\chi_2)\big)\nonumber\\[0.05in]
&\quad =\big((f'(\bar u_h)\bar v_h, \bar v_h)(\bar v_h,\chi_2)-(f'(u_h)v_h, v_h)(v_h,\chi_2)\big)\\[0.05in]
&\qquad +(f'(u_h)v_h-f'(\bar u_h)\bar v_h,\chi_2).\nonumber
\end{align}
Setting $\chi_2= \nu_h$ in (\ref{epv}) and applying similar estimates as (\ref{epua}), we have 
\begin{equation}
	\begin{aligned}\label{epva}
\big|(\nabla v_h,\nabla v_h)(v_h,\nu_h)-(\nabla \bar v_h,\nabla \bar v_h)(\bar v_h,\nu_h)\big|
\leq Q\|\nabla \nu_h\|\cdot \|\nu_h\|+ Q\|\nu_h\|^2.
\end{aligned}
\end{equation}
By the Sobolev embedding $H^{1}(\Omega)\hookrightarrow L^4(\Omega)$, the Poincar\'e inequality and Lemma \ref{lem0}, we derive
\begin{align}
&\label{epvb}\big|(f'(u_h)v_h-f'(\bar u_h)\bar v_h,\nu_h)\big|=\big|\big( (f'(u_h)-f'(\bar u_h))v_h + f'(\bar u_h)(v_h-\bar v_h),\nu_h\big) \big|\\[0.05in]
& ~ \leq Q(|\mu_h|\cdot |v_h|,|\nu_h|)+ Q\|\nu_h\|^2 \leq Q\|\mu_h v_h\|\cdot \|\nu_h\|+ Q\|\nu_h\|^2\nonumber \\[0.05in]
&~  \leq Q\|\mu_h\|_{L^4}\cdot \|v_h\|_{L^4}\cdot \|\nu_h\|+Q\|\nu_h\|^2  \leq Q\|\mu_h\|_{H^1}\cdot \|v_h\|_{H^1}\cdot \|\nu_h\|+Q\|\nu_h\|^2\nonumber\\[0.05in]
&~  \leq Q\|\nabla \mu_h\|\cdot \|\nu_h\|+ Q\|\nu_h\|^2.\nonumber
\end{align}
Combine a similar derivation as (\ref{epvb}) and  $H^{1}(\Omega)\hookrightarrow L^4(\Omega)$ to get
\begin{equation}
	\begin{aligned}\label{epvc}
& \big|(f'(\bar u_h)\bar v_h, \bar v_h)(\bar v_h,\nu_h)-(f'(u_h)v_h, v_h)(v_h,\nu_h)\big|\\[0.05in]
&\qquad\leq Q\|\mu_h\|\cdot\|\bar v_h\|^2_{L^4} \cdot \|\nu_h\|+Q\|\nu_h\|^2\leq Q\|\mu_h\|\cdot\|\nu_h\|+Q\|\nu_h\|^2.
	\end{aligned}
\end{equation}
Substituting the above estimates in (\ref{epv}) and using the Young’s inequality we get
\begin{equation}\label{epv1}
	\begin{aligned}
\frac{d}{dt}\|\nu_h\|^2 \leq  Q\|\nu_h\|^2+Q\|\mu_h\|^2+Q\|\nabla \mu_h\|^2.
	\end{aligned}
\end{equation}
We finally set $\chi_2=\nu_{h,t}$ in (\ref{epv}) and apply a similar process as  (\ref{epva})--(\ref{epvc}) to get 
\begin{equation}\label{epv2}
	\begin{aligned}
\frac{d}{dt}\|\nabla \nu_h\|^2  \leq Q\|\nu_h\|^2+ Q\|\nabla \nu_h\|^2+Q\|\mu_h\|^2+Q\|\nabla \mu_h\|^2.
	\end{aligned}
\end{equation}
Summing (\ref{epu1}), (\ref{epu2}), (\ref{epv1}) and (\ref{epv2}) together and invoking the Gronwall inequality, we conclude that $\|\mu_h\|+\|\nabla \mu_h\|+\|\nu_h\|+\|\nabla \nu_h\|=0$, which completes the proof.
\end{proof}

\subsection{Error estimates}\textbf{}
We prove error estimates for (\ref{approximation}).
\begin{theorem}\label{thm33}
Suppose the Assumptions A--B hold, and $u,v\in W^{1,1}(0,T;H^2)$. Then the  error estimate $\| u- u_h\|+\| v- v_h\|\leq Qh^2$ for the semi-discrete scheme (\ref{approximation}) holds for $0\leq t\leq T$ and for $h<h_0$ given in Assumption B.
\end{theorem}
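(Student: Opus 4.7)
\textbf{Proof proposal for Theorem \ref{thm33}.} The plan is to use the standard error-splitting technique via the elliptic projection $P$ from \eqref{eta1}. I would write $u - u_h = (u - Pu) + (Pu - u_h) =: \eta_u + \xi_u$ and $v - v_h = \eta_v + \xi_v$, where $\|\eta_u\| + h\|\nabla\eta_u\| + \|\eta_v\| + h\|\nabla\eta_v\| \leq Qh^2$ and analogous bounds hold for $\eta_{u,t}, \eta_{v,t}$ using the regularity $u, v \in W^{1,1}(0,T;H^2)$. Since $\xi_u, \xi_v \in S_h$, the triangle inequality reduces the theorem to proving $\|\xi_u\| + \|\xi_v\| \leq Qh^2$.

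Next I would subtract the semi-discrete scheme \eqref{approximation} from the weak form \eqref{weak}, substitute the splitting, and use the orthogonality $(\nabla\eta_u, \nabla\chi_1) = (\nabla\eta_v, \nabla\chi_2) = 0$ for $\chi_1, \chi_2 \in S_h$ to obtain error equations of the form
\begin{equation*}
\begin{aligned}
\beta^{-1}(\xi_{u,t}, \chi_1) + (\nabla\xi_u, \nabla\chi_1) &= -\beta^{-1}(\eta_{u,t}, \chi_1) + 2\mathcal{R}_1(\chi_1) + \beta^{-1}\mathcal{R}_2(\chi_1),\\
\gamma^{-1}(\xi_{v,t}, \chi_2) + (\nabla\xi_v, \nabla\chi_2) &= -\gamma^{-1}(\eta_{v,t}, \chi_2) + \mathcal{R}_3(\chi_2) + \gamma^{-1}\mathcal{R}_4(\chi_2),
\end{aligned}
\end{equation*}
where $\mathcal{R}_1, \dots, \mathcal{R}_4$ collect the differences of the nonlinear and nonlocal terms. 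Each such difference, e.g. $(\nabla v,\nabla u)(v,\chi_1) - (\nabla v_h,\nabla u_h)(v_h,\chi_1)$, is decomposed by adding and subtracting mixed products into three contributions linear in $\nabla(u-u_h)$, $\nabla(v-v_h)$, $v-v_h$, respectively, and each of these is then further split via $u - u_h = \eta_u + \xi_u$, $v - v_h = \eta_v + \xi_v$. Using Lemma \ref{lem0}, Assumption B, the local Lipschitz bound \eqref{lip}, the Sobolev embedding $H^1 \hookrightarrow L^4$ (exactly as in \eqref{epvb}--\eqref{epvc}), and the $H^2$ regularity of $u,v$, every such piece is controlled by $Q h^2$ or by $Q(\|\xi_u\| + \|\nabla\xi_u\| + \|\xi_v\| + \|\nabla\xi_v\|)$ times a small factor.

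I would then run the same four-fold energy estimate used in the uniqueness proof: test with $\chi_1 = \xi_u$ and $\chi_1 = \xi_{u,t}$ to obtain bounds on $\tfrac{d}{dt}\|\xi_u\|^2$ and $\tfrac{d}{dt}\|\nabla\xi_u\|^2$, and similarly with $\chi_2 = \xi_v, \xi_{v,t}$ for the $v$-equation, applying the $\varepsilon$-inequality to absorb the leading $\|\xi_{u,t}\|^2, \|\xi_{v,t}\|^2$ contributions into the left-hand side. Summing the four inequalities yields
\begin{equation*}
\frac{d}{dt}\bigl(\|\xi_u\|^2 + \|\nabla\xi_u\|^2 + \|\xi_v\|^2 + \|\nabla\xi_v\|^2\bigr) \leq Q\bigl(\|\xi_u\|^2 + \|\nabla\xi_u\|^2 + \|\xi_v\|^2 + \|\nabla\xi_v\|^2\bigr) + Qh^4,
\end{equation*}
and since $\xi_u(0) = \xi_v(0) = 0$ by the choice of initial data, Gronwall's inequality gives the desired $O(h^2)$ bound; combining with the $\eta$-estimate completes the proof.

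The main obstacle is the bookkeeping of the nonlinear-nonlocal difference terms: one must be careful that every piece of $\mathcal{R}_i$ splits into either an $\eta$-type term (which produces the clean $h^2$ truncation) or a $\xi$-type term that is absorbable by the left-hand side, without any residue of the form $\|\nabla\eta_v\|$ paired with a non-small factor, which would degrade the order from $h^2$ to $h$. The use of $\|v_h\| = 1$, the $H^1 \hookrightarrow L^4$ embedding to handle products like $\mu_h v_h$, and the elliptic-projection orthogonality are all essential to keep the optimal order.
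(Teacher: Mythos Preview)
Your overall strategy---elliptic-projection splitting, error equation, energy estimate, Gronwall---is exactly that of the paper, and you have correctly identified the dangerous residue $\|\nabla\eta_v\|$ that would degrade the order to $h$. The fix you hint at via ``the $H^2$ regularity of $u,v$'' is integration by parts: the paper writes $(\nabla u,\nabla(v-v_h))=-(\Delta u, v-v_h)$ and $(\nabla(v-v_h),\nabla v)=-(v-v_h,\Delta v)$, which together with the orthogonality $(\nabla\eta_u,\nabla v_h)=0$ removes every gradient of an $\eta$-term and leaves only $\|v-v_h\|$, $\|u-u_h\|$, and $\|\nabla\xi_u\|$ (the last absorbed by the coercive term). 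Make this step explicit; orthogonality alone does not handle $(\nabla\eta_v,\nabla u)$ since $u\notin S_h$.

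Where your proposal actually departs from the paper is the four-fold test and the squared Gronwall inequality, and this causes a genuine regularity gap under the stated hypothesis $u,v\in W^{1,1}(0,T;H^2)$. Testing with $\chi=\xi_{u,t}$ or $\xi_{v,t}$ forces you to bound $(\eta_{u,t},\xi_{u,t})$ by $\varepsilon\|\xi_{u,t}\|^2+Q\|\eta_{u,t}\|^2$, and $\|\eta_{u,t}\|^2\le Qh^4\|u_t\|_{H^2}^2$ is only in $L^{1/2}(0,T)$ under $W^{1,1}$, so your ``$+\,Qh^4$'' source is not integrable. The paper avoids this by testing only with $\chi_1=\xi_u$, $\chi_2=\xi_v$, then writing $\tfrac{d}{dt}\|\xi_u\|^2=2\|\xi_u\|\tfrac{d}{dt}\|\xi_u\|$ and \emph{cancelling} the common factor $\|\xi_u\|$ to obtain a linear inequality $\tfrac{d}{dt}\|\xi_u\|\le Q\|\xi_u\|+Q\|v-v_h\|+Qh^2\|u_t\|_{H^2}$; the $L^1$-in-time source then integrates cleanly. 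With the integration by parts above, the right-hand sides close in $\|\xi_u\|,\|\xi_v\|$ alone, so the $H^1$ tests are unnecessary. Drop the four-fold estimate and switch to the linear differential inequality to match the theorem's regularity assumption.
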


\begin{proof}
We decompose $ u- u_h= u-P u+ P u-u_h=: \eta_u+\xi_u$ and $v- v_h= v-P  v+P v - v_h=:\eta_v+\xi_v$.  Subtracting the first equation of  (\ref{approximation}) from that of (\ref{weak}) and setting $\chi_1=\xi_u$ in the resulting equation, we derive the error equation for $u$
\begin{equation}
	\begin{aligned}\label{errorU}
& \beta^{-1}(\xi_{u,t}+\eta_{u,t},\xi_u)+(\nabla \xi_u, \nabla \xi_u)-2\big((\nabla  u,\nabla  v)(v, \xi_u)-(\nabla u_h,\nabla v_h)(v_h, \xi_u)\big) \\[0.05in]
&\qquad =\big(f(u)- f(u_h),\xi_u\big)-2\big(( f(u), v)(v,\xi_u)-(f(u_h), v_h)(v_h,\xi_u)\big).
\end{aligned}
\end{equation}
By Lemma \ref{lem0}, $\big(\nabla u,\nabla(v-v_h)\big)=(-\Delta u, v-v_h)$ and $W^{1,1}(0,T;H^2) \hookrightarrow C([0,T];H^2)$, we have
\begin{equation*}
\begin{aligned}
&2\big |(\nabla  u_h,\nabla  v_h)( v_h, \xi_u) -(\nabla u,\nabla  v)( v, \xi_u)\big|= 2\big |(\nabla (u_h-u),\nabla  v_h)( v_h, \xi_u)\\[0.05in]
& \qquad + (\nabla u,\nabla(v_h-v))(v_h, \xi_u)+ (\nabla u,\nabla v)(v_h-v, \xi_u)\big| \\[0.05in]
& \quad \leq \|\nabla \xi_u\|^2+Q\|\xi_u\|^2+2\|\Delta  u\|\cdot \| v- v_h\|\cdot\|\xi_u\|+Q\|v-v_h\|\cdot\|\xi_u\|.
\end{aligned}
\end{equation*}
 Similar to (\ref{epub}), we apply $W^{1,1}(0,T;H^2)  \hookrightarrow C([0,T];L^\infty)$ to get
\begin{equation*}
	|(f(u), v)( v, \xi_u)-(f(u_h), v_h)( v_h,\xi_u)| \leq Q\| v- v_h\|\cdot \|\xi_u\|+Q\|u-u_h\|\cdot \|\xi_u\|.
\end{equation*}
Substituting these estimates into (\ref{errorU}) and using $\|\eta_{u,t}\|\leq Q\|u_t\|_{H^2}h^2$ according to (\ref{eta1}), we obtain
\begin{equation*}
	\begin{aligned}
\frac{\beta^{-1}}{2}\frac{d}{dt}\|\xi_u\|^2 +\|\nabla \xi_u \|^2 & \leq \|\nabla \xi_u\|^2+Q\|\xi_u\|^2+2\|\Delta  u\|\cdot \| v- v_h\|\cdot\|\xi_u\|\\
&\quad  +Q\|v-v_h\|\cdot\|\xi_u\|+Q\|u-u_h\|\cdot \|\xi_u\|+Qh^2\|u_t\|_{H^2}\cdot\|\xi_u\|  ,
\end{aligned}
\end{equation*}
for which we could apply $\frac{d}{dt}\|\xi_u\|^2=2\|\xi_u\|\frac{d}{dt}\|\xi_u\|$, cancel $\|\xi_u\|$ on both sides and employ $\|u-u_h\|\leq \|\xi_u\|+\|\eta_u\|\leq \|\xi_u\|+Q\|u\|_{H^2}h^2$ to get
$$\beta^{-1} \frac{d}{dt}\|\xi_u\|   \leq Q\big[\| v- v_h\|+h^2\|u\|_{H^2}+\|\xi_u\|+h^2\|u_t\|_{H^2}\big]. $$
An application of the Gronwall inequality leads to
\begin{equation}\label{xiU}
	\|\xi_u\|\leq Q\int_0^t\| v - v_h\|ds+Q\|u\|_{W^{1,1}(0,T;H^2)}h^2\leq Q\int_0^t\| \xi_v\|ds+Qh^2.
\end{equation}

To estimate the error of $v-v_h$, we subtract the second equation of (\ref{approximation}) from that of (\ref{weak}) and set $\chi_2=\xi_v$ to get
\begin{equation}
\begin{aligned}\label{v}
&\gamma^{-1}(\xi_{v,t}+\eta_{v,t},\xi_v)+(\nabla \xi_v, \nabla \xi_v)-\big((v,\xi_v)(\nabla  v,\nabla v)-(v_h,\xi_v)(\nabla v_h,\nabla v_h)\big)\\[0.05in]
&\quad=(f'(u) v- f'(u_h)v_h,\xi_v)-\big ((v,\xi_v)( f'(u) v, v)-(v_h,\xi_v)(f'(u_h)v_h, v_h)\big).
\end{aligned}
\end{equation}
We apply a similar splitting argument as we did before and $(\nabla (v-v_h), \nabla v)= -(v-v_h, \Delta v)$ to get
\begin{equation*}
	\begin{aligned}
&\big|( v,\xi_v)(\nabla  v,\nabla  v)-(v_h,\xi_v)(\nabla v_h,\nabla v_h) \big|\!=\!\big|(v, \xi_v)\big[(\nabla (v-v_h),\nabla v)+ (\nabla v_h,\nabla (v-v_h))\big] \\[0.05in]
& ~ + (v-v_h, \xi_v)(\nabla v_h,\nabla v_h)\big| \leq Q\|\xi_v\|\| v-v_h\|+\|\nabla \xi_v\|^2+Q\|\xi_v\|^2 .
	\end{aligned}
\end{equation*}
By employing $W^{1,1}(0,T;H^2)  \hookrightarrow C([0,T];L^\infty)$ we derive
\begin{equation*}
	\begin{aligned}
 & |(f'(u) v- f'(u_h) v_h,\xi_v)|=|(f'(u_h)(v-v_h)+(f'(u)-f'(u_h))v, \xi_v )| \\[0.05in]
 &\qquad \leq \big(\|f'(u_h)\|_{L^{\infty}} \|v-v_h\|+\|v\|_{L^{\infty}} \|f'(u)-f'(u_h)\| \big)\cdot \|\xi_v\|\\[0.05in]
 &\qquad \leq Q\big(\| u- u_h\|+\| v- v_h\|\big) \|\xi_v\|  ,\\[0.05in]
 &|(v,\xi_v)( f'(u) v, v)-( v_h,\xi_v)( f'(u_h) v_h, v_h)| \leq Q\big(\| v- v_h\|+\| u-u_h\|\big) \|\xi_v\|.
\end{aligned}
\end{equation*}
We substitute the above estimates in (\ref{v}) and apply $\frac{d}{dt}\|\xi_v\|^2=2\|\xi_v\|\frac{d}{dt}\|\xi_v\|$  to get
\begin{align}
\frac{d}{dt}\|\xi_v\|\leq Q\big[\|\xi_v\|+\|u-u_h\|+\|v-v_h\|\big]\leq Q\big[\|\xi_v\|+\|\xi_u\|+\|u\|_{H^2}h^2+\|v\|_{H^2}h^2\big]. \nonumber
\end{align}
An application of the Gronwall inequality gives
$
	\|\xi_v\| \leq Qh^2+\int_0^t\|\xi_u\|ds.
$
We add this and  (\ref{xiU}) and apply the Gronwall inequality again to get
$
	\|\xi_u\|+\|\xi_v\|\leq Qh^2,
$
which, together with (\ref{eta1}), completes the proof.
\end{proof}

Based on the error estimate, we could give a theoretical support for the index-preservation issue of the scheme.
\begin{corollary}\label{cor1}
Under the conditions of Theorem \ref{thm33}, if $\lim\limits_{t\rightarrow \infty}u(t)=u^*$ under the $L^2$ sense for an index-1 saddle point $u^*$, then for any $\delta>0$, the approximation $\|u_h(T)-u^*\|\leq \delta$ holds for $T$ large enough and $h$ small enough.

In addition, if for any $\hat u\in B_\delta(u^*):=\{\hat u\in L^2:\,\|\hat u-u^*\|\leq \delta\}$ for some $\delta>0$ the eigenvalues of $-\Delta-f'(\hat u)$ and $-\Delta-f'( u^*)$ under zero Dirichlet boundary conditions have the same signs, then the eigenvalues of  $-\Delta-f'(u_h(T))$ and $-\Delta-f'( u^*)$ also have the same signs for $T$ large enough and $h$ small enough, that is, the scheme (\ref{approximation}) is index-preserving. 
\end{corollary}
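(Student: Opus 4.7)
The plan is to reduce both claims to direct consequences of Theorem \ref{thm33} combined with the hypothesized convergence of the continuous solution, via the triangle inequality. No new machinery should be required beyond careful handling of the order of quantifiers (fixing $T$ first, then shrinking $h$).

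For the first assertion, I would start by fixing an arbitrary $\delta>0$ and splitting
\begin{equation*}
\|u_h(T)-u^*\|\leq \|u_h(T)-u(T)\|+\|u(T)-u^*\|.
\end{equation*}
The hypothesis $u(t)\to u^*$ in $L^2$ lets me choose $T=T(\delta)$ large enough that the second term is bounded by $\delta/2$. With this $T$ now fixed, I would invoke Theorem \ref{thm33} on the interval $[0,T]$, which yields $\|u(T)-u_h(T)\|\leq Q(T)h^2$ for some constant $Q(T)$ depending on $T$ (through the Gronwall factor) but independent of $h$. Choosing $h<h_0$ small enough so that $Q(T)h^2\leq \delta/2$ gives $\|u_h(T)-u^*\|\leq \delta$, as desired. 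The only nontrivial point to emphasize in writing this up is the order of quantifiers: the constant $Q$ in Theorem \ref{thm33} depends on $T$, so $T$ must be selected before $h$, not simultaneously.

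For the second assertion, I would first apply the first part of the corollary with the given $\delta>0$ to obtain some $T_0$ and $h_0'\leq h_0$ such that $u_h(T)\in B_\delta(u^*)$ whenever $T\geq T_0$ and $h<h_0'$. By the hypothesis on the ball, the operator $\Delta+f'(u_h(T))$ under zero Dirichlet boundary conditions then has eigenvalues of the same signs as those of $\Delta+f'(u^*)$. Since $u^*$ is an index-1 saddle point, the latter operator has exactly one positive eigenvalue (in the convention $-\Delta-f'(u^*)$ has one negative eigenvalue $\lambda_1^*<0$) and no vanishing eigenvalue, so the same holds for $\Delta+f'(u_h(T))$, which is exactly the index-preservation statement.

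I do not expect a genuine obstacle here; the corollary is designed to package Theorem \ref{thm33} together with the assumed continuity of the eigenvalue-sign structure near $u^*$. The mildly subtle piece is simply being explicit that $\delta$ must be chosen first (in order to invoke the hypothesis on $B_\delta(u^*)$), that $T$ is chosen next based on the convergence of $u(t)$, and only then is $h$ shrunk using the finite-element error bound; writing the quantifiers in that order makes the argument transparent.
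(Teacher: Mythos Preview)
Your proposal is correct and follows essentially the same approach as the paper: triangle inequality, choose $T$ to control $\|u(T)-u^*\|\leq\delta/2$, then apply Theorem~\ref{thm33} on $[0,T]$ and shrink $h$ so that $Q(T)h^2\leq\delta/2$; the second assertion is then an immediate consequence of $u_h(T)\in B_\delta(u^*)$. Your explicit discussion of the quantifier order is a bit more detailed than the paper's presentation, but the argument is the same.
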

\begin{proof}
As $\lim\limits_{t\rightarrow \infty}u(t)=u^*$ under the $L^2$ sense, there exist a $T_0>0$ such that $\|u(T_0)-u^*\|\leq \delta/2$.  According to Theorem \ref{thm33} with $T=T_0$, for $h$ small enough we have $\|u_h(T_0)-u(T_0)\|\leq QM_0h^2\leq \delta/2$. Thus we have $\|u_h(T)-u^*\|\leq \delta$. The second statement of the theorem is a direct consequence of this estimate.
\end{proof}
\begin{remark}
Corollary \ref{cor1} implies that $u_h(T)$ could approximate $u^*$ with an arbitrarily small error for $T$ sufficiently large and $h$ small enough, which justifies the sufficiency of considering (\ref{1order}) on a finite interval $[0,T]$.
Furthermore, the Corollary \ref{cor1} could also be rewritten under the $L^\infty$ sense. In this case, we need to estimate $\|u-u_h\|_{L^\infty}$, which could be obtained from the $L^2$ error estimate in Theorem \ref{thm33}, the interpolation estimate (\ref{inter}) and the inverse estimate (\ref{inv})
\begin{equation*}
	\begin{aligned}
	\displaystyle  \|u-u_h\|_{L^\infty} &\leq \| u-I_h u\|_{L^\infty}+\|I_h u- u_h\|_{L^\infty}\leq Qh\|u\|_{H^2}+Qh^{-\frac{d}{2}}\|I_h u- u_h\|\\[0.05in]
	\displaystyle  &\hspace{-0.5in}\leq Qh+ Qh^{-\frac{d}{2}}\big(\|I_h u- u\|+\| u- u_h\|\big)\leq Qh+Qh^{-\frac{d}{2}}(h^2+h^2)\leq Qh^{\min\{1,2-\frac{d}{2}\}}.
	\end{aligned}
\end{equation*}
\end{remark}
\begin{remark}\label{rem33}
The Corollary \ref{cor1} depends on the following assumption ``for any $\hat u\in B_\delta(u^*)$ for some $\delta>0$ the eigenvalues of $-\Delta-f'(\hat u)$ and $-\Delta-f'( u^*)$ under zero Dirichlet boundary conditions have the same signs''. In this remark we will prove that this assumption is valid at least for a wide class of one-dimensional  problems. 

Consider the following eigenvalue problem
\begin{align}\label{zjy10}
(-\Delta +q(x))u(x)=\lambda u(x),~~x\in (0,1);~~u(0)=u(1)=0,
\end{align}
which corresponds to the model (1.1) proposed in \cite{LiQi} with $\mu=0$. Here $q\in L^2$ is a potential function. It is shown in \cite[Proposition 2.3]{LiQi} that:
\begin{align*}
&(P_0)\text{ All eigenvalues of (\ref{zjy10}) are simple and satisfy }\\
&~~~\,~~-\infty<\lambda_{1,q}<\lambda_{2,q}<\cdots\lambda_{n,q}<\cdots,~~\lambda_{n,q}\sim\pi^2n^2,~~n\rightarrow \infty.
\end{align*}
Here $\lambda_{n,q}$ denotes the $n$-th eigenvalue of (\ref{zjy10}) with the potential function $q$. 
Then \cite[Theorem 3.2]{LiQi} reveals the continuous dependence of the $n$-th eigenvalue of (\ref{zjy10}) on the potential function $q$: 
\begin{align*}
&(P_1) \text{ Let }n\geq 1 \text{ and }q_0\in L^2 \text{ be fixed. For any }\varepsilon>0 \text{ there exists a }\delta_n>0\\ 
&~~~~~\text{ such that if }\|q_1-q_0\|\leq \delta_n \text{ for any }q_1\in L^2, \text{ then }|\lambda_{n,q_1}-\lambda_{n,q_0}|<\varepsilon.
\end{align*}

Now we intend to use the results $(P_0)$--$(P_1)$ to show that, if $f'$ satisfies the local Lipschitz condition under the $L^2$ sense, i.e.,
\begin{align}\label{zjy14}
\|f'(u_1)-f'(u_2)\|\leq \tilde L_r\|u_1-u_2\|\text{ for some }\tilde L_r>0\text{ if }u_1,u_2\in B_r(0)\text{ for some }r>0,
\end{align}
then the assumption on the signs of eigenvalues in Corollary \ref{cor1} can be justified, i.e., the eigenvalues of $-\Delta-f'(\hat u)$ and $-\Delta-f'(u^*)$ that are defined on $\Omega=(0,1)$ and equipped with zero Dirichlet boundary conditions have the same signs for $\hat u\in B_\delta(u^*)$ for some $\delta>0$ (to be specified later). 

To start the proof, suppose $u^*$ is an index-$1$ saddle point. Since $u^*\in L^2$, (\ref{zjy14}) implies that $f'(u^*)\in L^2$. Thus we apply $(P_0)$ to find that the eigenvalues of  $-\Delta-f'(u^*)$ are simple and can be denoted as  $\{\lambda_{n,-f'(u^*)}\}_{n=1}^\infty$ with $\lambda_{1,-f'(u^*)}<0<\lambda_{2,-f'(u^*)}<\lambda_{3,-f'(u^*)}<\cdots$. Denote $\sigma:=\frac{1}{2}\min\{-\lambda_{1,-f'(u^*)},\lambda_{2,-f'(u^*)}\}>0$, and we apply $(P_1)$ twice with $n=1,2$ to find that for $\sigma>0$, there exists a $\tilde\delta:=\min\{\delta_1,\delta_2\}$ such that $|\lambda_{n,q}-\lambda_{n,-f'(u^*)}|<\sigma$ for $n=1,2$ and $q\in L^2$ satisfying $\|q-(-f'(u^*))\|\leq \tilde\delta$. By the definition of $\sigma$, we immediately obtain $\lambda_{q,1}<0<\lambda_{q,2}$ if $\|q-(-f'(u^*))\|\leq \tilde\delta$. Since the eigenvalues of $-\Delta+q$ are monotonically increasing (cf. $(P_0)$), we conclude that:
\begin{align*}
&(P_2)\text{ The eigenvalues of }-\Delta+q \text{ and }-\Delta-f'(u^*)\\
&~~\,~~~\text{  have the same signs if }\|q-(-f'(u^*))\|\leq \tilde\delta.
\end{align*}
To complete the proof, we remain to show that $\|(-f'(\hat u))-(-f'(u^*))\|\leq \tilde\delta$  for $\|\hat u-u^*\|\leq \delta$ with $\delta:=\min\{1,\tilde\delta /\tilde L_{\|u^*\|+1}\}$. Indeed, if $\|\hat u-u^*\|\leq \delta$, then $\|\hat u-u^*\|\leq 1$ such that $\|\hat u\|\leq \|u^*\|+1$, and we thus employ (\ref{zjy14}) to get
$$\|(-f'(\hat u))-(-f'(u^*))\|\leq \tilde L_{\|u^*\|+1}\|\hat u-u^*\|\leq \tilde L_{\|u^*\|+1}\delta \leq \tilde\delta. $$
Consequently, we apply $(P_2)$ to conclude that the eigenvalues of $-\Delta-f'(\hat u)$ and $-\Delta-f'(u^*)$ have the same signs for $\hat u\in B_\delta(u^*)$ with $\delta:=\min\{1,\tilde\delta /\tilde L_{\|u^*\|+1}\}$, which justifies the assumption on the signs of eigenvalues in Corollary \ref{cor1}.

Finally, we give some examples of $f(u)$ on $\Omega=(0,1)$ that satisfy the Assumption A and (\ref{zjy14}). We could directly verify that $f(u)=a\sin u+b\cos u$ for $a,b\in\mathbb R$, $f(u)=(1+u^2)^{1/2}$ or $f(u)=e^{-u^2}$ satisfy the required conditions. For such $f$, the assumption on the signs of eigenvalues in Corollary \ref{cor1} is valid.
\end{remark}
\section{Analysis of full discretization}
\subsection{Numerical scheme}
 Given $0<N\in \mathbb{N}$, we define a uniform temporal partition of the interval $[0,T]$ by $t_n=n\tau$ for $0\leq n\leq N$ with time step size $\tau = T/N$. 
We approximate the derivative by the backward Euler scheme at $t_n$ as follows
\begin{equation*}
	y_t(t_n)=\bar \partial_t y_n+R_n^y := \frac{y(t_n)-y(t_{n-1})}{\tau}+\frac{1}{\tau}\int _{t_{n-1}}^{t_n} y_{tt}(t)(t-t_{n-1})dt,
\end{equation*}
where $y$ refers to $u$ or $v$. Invoking this approximation into (\ref{weak}) yields
\begin{equation}\label{Euler}
\left\{
\begin{array}{l}
\beta ^{-1}\big(\bar \partial _t u(t_n)+R_n^u, \chi_1 \big) +  \big(\nabla u(t_{n}), \nabla \chi_1 \big) -  2\big(\nabla v(t_{n}), \nabla u(t_{n})\big)(v(t_{n}), \chi_1) \\[0.1in]
 \qquad =  (f(u(t_{n})), \chi_1) - 2\big(f(u(t_{n})), v(t_{n})\big)(v(t_{n}),\chi_1),\\[0.15in]
\gamma^{-1}\big(\bar \partial _t v(t_n)+R_n^v, \chi_2 \big) + (\nabla v(t_{n}), \nabla \chi_2)-(v(t_{n}), \chi_2)\big(\nabla v(t_{n}),\nabla v(t_{n})\big)\\[0.1in]
 \qquad = \big(f'(u(t_{n}))v(t_{n}),\chi_2 \big)- (v(t_{n}),\chi_2)\big(f'(u(t_{n}))v(t_{n}), v(t_{n})\big).
\end{array}
\right.
\end{equation}
We drop the truncation errors to get the fully-discrete scheme: find $\{u_h^n, v_h^n)\}_{n=1}^{N}$ such that for any $\chi_1,\chi_2 \in S_h$
\begin{equation}\label{app}
\left\{
\begin{array}{l}
 \displaystyle \beta ^{-1}\big(\frac{u^n_h-u^{n-1}_h}{\tau}, \chi_1\big)+(\nabla u^{n}_h, \nabla \chi_1)-2(v^{n-1}_h, \chi_1)(\nabla v^{n-1}_h, \nabla u^{n}_h)\\[0.1in]
\qquad  =(f(u^{n-1}_h), \chi_1)-2(v^{n-1}_h,\chi_1)(f(u^{n-1}_h), v^{n-1}_h),\\[0.15in]
\displaystyle \gamma^{-1}\big(\frac{ v^{*,n}_h-v^{n-1}_h}{\tau}, \chi_2\big)+(\nabla v^{*,n}_h, \nabla \chi_2)-(v^{n-1}_h,\chi_2)(\nabla v^{n-1}_h,\nabla v^{*,n}_h)\\[0.1in]
\qquad  =(f'(u^{n-1}_h)v^{n-1}_h,\chi_2)-(v^{n-1}_h,\chi_2)(f'(u^{n-1}_h)v^{n-1}_h, v^{n-1}_h),\\[0.15in]
\displaystyle v^n_h= v^{*,n}_h/\| v^{*,n}_h\|,
\end{array}
\right.
\end{equation}
with $u_h^0=P u_0$ and $v_h^0=P v_0$. Different from the semi-discrete scheme, a retraction procedure appears in the fully-discrete case as the scheme of $v$ could not preserve the unit length of $v^{*,n}_h$. Concerning this, we define $e_u^{n}:=u(t_n)-u_h^n$, $  e_v^{n}:=v(t_n)-v_h^n$ and $  e_v^{*,n}:=v(t_n)-v_h^{*,n}$
and subtract (\ref{Euler}) from (\ref{app}) to get error equations 
\begin{equation}\label{error}
\left\{
\begin{array}{l}
\displaystyle  \beta^{-1}\big(\frac{e^n_u-e^{n-1}_u}{\tau}+R_u^n, \chi_1\big)  +  (\nabla e^{n}_u, \nabla \chi_1)  +  \mu_1^n  =   \mu_2^n  + \mu_3^n ,\\[0.2in]
\displaystyle \gamma^{-1}\big(\frac{ e^{*,n}_v-e^{n-1}_v}{\tau}+R_v^n, \chi_2\big)+(\nabla e^{*,n}_v, \nabla \chi_2)+\nu_1^n =\nu_2^n +\nu_3^n ,\end{array}
\right.
\end{equation}
where $\displaystyle R_u^n=u_t(t_n)-\frac{u(t_n)-u(t_{n-1})}{\tau}$, $\displaystyle R_v^n=v_t(t_n)-\frac{v(t_n)-v(t_{n-1})}{\tau}$ and
\begin{align*}
\mu_1^n:& =2(\nabla v^{n-1}_h, \nabla u^{n}_h)(v^{n-1}_h, \chi_1)-2\big(\nabla v(t_{n}), \nabla u(t_{n})\big)(v(t_{n}), \chi_1), \\[0.05in]
\mu_2^n:& =2\big(f(u^{n-1}_h), v^{n-1}_h\big)(v^{n-1}_h,\chi_1)-2\big(f(u(t_{n})), v(t_{n})\big)(v(t_{n}),\chi_1),\\[0.05in]
\mu_3^n:& =\big(f(u(t_n))-f(u^{n-1}_h), \chi_1\big),\\[0.05in]
\nu_1^n:&= (v^{n-1}_h,\chi_2)(\nabla v^{*,n}_h,\nabla v^{n-1}_h)- \big(v(t_{n}),\chi_2\big)\big(\nabla v(t_{n}),\nabla v(t_{n})\big), \\[0.05in]
\nu_2^n:&=  \big(f'(u(t_{n}))v(t_{n}),\chi_2 \big)-\big(f'\big(u^{n-1}_h\big)v^{n-1}_h,\chi_2 \big),\\[0.05in]
\nu_3^n:&= (v^{n-1}_h,\chi_2)\big(f'(u^{n-1}_h)v^{n-1}_h, v^{n-1}_h\big)-(v(t_{n}),\chi_2)\big(f'(u(t_{n}))v(t_{n}), v(t_{n})\big).
\end{align*}


\subsection{Well-posedness issue}
We prove the well-posedness of the solutions to the fully-discrete scheme (\ref{app}) based on an analogous condition in \textit{Assumption B}:

\textit{Assumption C}:  There exist $h_0,\tau_0>0$ such that for any $0<h<h_0$ and $0<\tau<\tau_0$ the solution $u^n_h$ of (\ref{app}), if exists, satisfies 
$\ds	\max_{0\leq n\leq N}\|u_h^n\|_{L^\infty}\!\!\leq \!\!C_1$  for some $C_1>0$.

\begin{theorem}\label{stable}
Under Assumptions A and C, there exists a unique solution to the scheme (\ref{app}) for $\tau$ and $h$ small enough with stability estimates for $1\leq n\leq N$
\begin{align}\label{BNablaV}
		\|\nabla u_h^n\|+\|\nabla v_h^n\|+\|\nabla v_h^{*,n}\|\leq Q,~~\big|\|v_h^{*,n}\|-1\big|\leq \big|\|v_h^{*,n}\|^2-1\big|\leq Q\tau.
\end{align}
\end{theorem}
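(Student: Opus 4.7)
I would proceed by induction on $n$, simultaneously establishing existence/uniqueness of $u_h^n,v_h^{*,n},v_h^n$ and the bounds (\ref{BNablaV}). The base case $n=0$ is immediate from $u_h^0=Pu_0$, $v_h^0=Pv_0$, $\|v_h^0\|=1$, and $H^1$-stability of $P$. For the inductive step, assuming $\|\nabla u_h^{n-1}\|+\|\nabla v_h^{n-1}\|\leq Q$ with $\|v_h^{n-1}\|=1$, I first establish well-posedness of the linear variational problems in (\ref{app}) defining $u_h^n$ and $v_h^{*,n}$ via Lax--Milgram in $S_h$: the only nonstandard bilinear contribution $(v_h^{n-1},\cdot)(\nabla v_h^{n-1},\nabla\cdot)$ is controlled by Cauchy--Schwarz and the induction hypothesis, and for $\tau$ sufficiently small the $\tau^{-1}\|\cdot\|^2$ mass term absorbs it, delivering coercivity and thus existence/uniqueness.

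The decisive structural observation is that testing the $v_h^{*,n}$-equation against $\chi_2=v_h^{n-1}$ and using $\|v_h^{n-1}\|^2=1$ cancels the two gradient trilinear terms against each other and the two $f'$-trilinear terms against each other, leaving simply $(v_h^{*,n},v_h^{n-1})=1$. Consequently
\[
\|v_h^{*,n}\|^2 = 1 + \|v_h^{*,n}-v_h^{n-1}\|^2 \geq 1,
\]
so the retraction is well-defined, $\|v_h^{*,n}\|-1\leq \|v_h^{*,n}\|^2-1$, and $(v_h^{n-1},v_h^{*,n}-v_h^{n-1})=0$ holds automatically. Retesting the same equation with $\chi_2=v_h^{*,n}-v_h^{n-1}$ then makes both cross terms vanish identically; polarization $2(a,a-b)=\|a\|^2-\|b\|^2+\|a-b\|^2$ on the diffusion term, Young's inequality, and Assumption~C to bound $\|f'(u_h^{n-1})\|_{L^\infty}$ yield
\[
\gamma^{-1}\tau^{-1}\|v_h^{*,n}-v_h^{n-1}\|^2 + \|\nabla v_h^{*,n}\|^2 \leq \|\nabla v_h^{n-1}\|^2 + Q\tau,
\]
hence $\|v_h^{*,n}-v_h^{n-1}\|^2\leq Q\tau$ (the second estimate in (\ref{BNablaV})); because $\|v_h^{*,n}\|\geq 1$ forces $\|\nabla v_h^n\|\leq\|\nabla v_h^{*,n}\|$, telescoping over $n\leq N$ with $N\tau=T$ closes the $v$-bounds.

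For $u_h^n$ I test the first equation of (\ref{app}) with $\chi_1=u_h^n-u_h^{n-1}$. The main obstacle is the coupling $2(v_h^{n-1},u_h^n-u_h^{n-1})(\nabla v_h^{n-1},\nabla u_h^n)$: a direct Young's inequality yields an $O(1)$ coefficient on $\|\nabla u_h^n\|^2$ that is incompatible with the $\tfrac12\|\nabla u_h^{n-1}\|^2$ from polarization and forces geometric growth across steps. The remedy is to split $\nabla u_h^n=\nabla u_h^{n-1}+\nabla(u_h^n-u_h^{n-1})$: the $\nabla u_h^{n-1}$ piece is uniformly bounded by induction so its contribution becomes of order $\|u_h^n-u_h^{n-1}\|$, absorbed by Young's into the $\tau^{-1}\|u_h^n-u_h^{n-1}\|^2$ mass term, while the $\nabla(u_h^n-u_h^{n-1})$ piece is absorbed into the $\|\nabla(u_h^n-u_h^{n-1})\|^2$ generated by polarization. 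The $f$-terms are uniformly controlled by Assumption~C and $\|v_h^{n-1}\|=1$. One then obtains $\|\nabla u_h^n\|^2\leq \|\nabla u_h^{n-1}\|^2+Q\tau$, which telescopes to the required bound.

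The hardest step is thus the $u$-stability: all other cancellations are either exact (the identities $(v_h^{*,n},v_h^{n-1})=1$ and $(v_h^{n-1},v_h^{*,n}-v_h^{n-1})=0$) or only higher-order (the retraction effect $\|\nabla v_h^n\|\leq\|\nabla v_h^{*,n}\|$ via $\|v_h^{*,n}\|\geq 1$, matching the ``high-order perturbation'' strategy signaled in the introduction).
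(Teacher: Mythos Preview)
Your proposal is correct and follows essentially the same inductive strategy as the paper: the identity $(v_h^{*,n},v_h^{n-1})=1$, the test function $\chi_2=v_h^{*,n}-v_h^{n-1}$ for the $v$-bound, $\chi_1=u_h^n-u_h^{n-1}$ for the $u$-bound, and Lax--Milgram for well-posedness all match. Your observation $\|v_h^{*,n}\|\geq 1\Rightarrow\|\nabla v_h^n\|\leq\|\nabla v_h^{*,n}\|$ is a clean way to telescope; the paper equivalently writes $\nabla v_h^{*,n}=\nabla v_h^n(1+\|v_h^{*,n}-v_h^{n-1}\|^2)^{1/2}$.

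One point worth noting: your diagnosis that a ``direct Young's inequality yields an $O(1)$ coefficient on $\|\nabla u_h^n\|^2$'' is overly pessimistic. The paper simply bounds the coupling by $2\tau\|\nabla v_h^{n-1}\|\,\|\nabla u_h^n\|\,\|u_h^n-u_h^{n-1}\|\leq Q\tau\|\nabla u_h^n\|\,\|u_h^n-u_h^{n-1}\|$ and then applies Young's inequality weighted so that $\|u_h^n-u_h^{n-1}\|^2$ is absorbed by the $\beta^{-1}$ mass term; this leaves $Q\tau^2\|\nabla u_h^n\|^2$, which after summing is handled by discrete Gronwall. This avoids your splitting $\nabla u_h^n=\nabla u_h^{n-1}+\nabla(u_h^n-u_h^{n-1})$ and, more importantly, does not require the inductive hypothesis on $\|\nabla u_h^{n-1}\|$---so the constants close without any bootstrapping of the $u$-bound against itself. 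Your route still works, but you should be explicit that the resulting recursion $\|\nabla u_h^n\|^2\leq\|\nabla u_h^{n-1}\|^2+Q(M)\tau$ (with $Q$ depending on the assumed bound $M$ for $\|\nabla u_h^{n-1}\|$) can be closed by choosing $M$ large enough.
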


\begin{proof}
We divide the induction procedure into two steps. In the first step, we assume that the numerical solutions exist for $1\leq n\leq m$ for some $m>0$ (the initial values, which corresponds to $n=0$, always exist) and then derive the stability estimates for $1\leq n\leq m$. In the second step, we invoke these stability estimates to show the existence and uniqueness of the numerical solutions at $n=m+1$. It is worth mentioning that the bound $Q$ in the stability estimate (\ref{BNablaV}) with $1\leq n\leq m$ is independent from $n$ and $m$ such that we do not need to worry about the increment of the bound during the induction procedure.

\textit{Step 1: Stability estimates for $1\leq n
\leq m$.}  Take $\chi_2 =v_h^{n-1}$ in the second equation of (\ref{app}) to obtain 
\begin{equation}\label{norm1}
	\begin{aligned}
		(v_h^{*,n},v_h^{n-1})=1~ (\text{such that }(v_h^{*,n}-v_h^{n-1},v_h^{n-1})=0) .
	\end{aligned}
\end{equation}
We apply this to get
\begin{equation}\label{norm3}
	\|v_h^{*,n}-v_h^{n-1}\|^2=\|v_h^{*,n}\|^2-2(v_h^{*,n},v_h^{n-1})+1=\|v_h^{*,n}\|^2-1.
\end{equation}
As $ v_h^{*,n}=v_h^n \|v_h^{*,n}\|$, we have $\nabla v_h^{*,n}=\nabla v_h^n \|v_h^{*,n}\|$, which, together with (\ref{norm3}), leads to 
\begin{align}\label{yx1}
\nabla v_h^{*,n}=\nabla v_h^n (1+\|v_h^{*,n}-v_h^{n-1}\|^2)^{1/2}.
\end{align}
Then we select $\chi_2=v_h^{*,n}-v_h^{n-1}$ and incorporate (\ref{norm1}) to get
\begin{equation*}
	\gamma^{-1}\|v_h^{*,n}-v_h^{n-1}\|^2+\tau\big(\nabla v_h^{*,n},\nabla (v_h^{*,n}-v_h^{n-1})\big)=\tau (f'(u^{n-1}_h)v^{n-1}_h, v_h^{*,n}-v_h^{n-1}).
\end{equation*}
An application of the Cauchy inequality  yields
\begin{equation*}
\gamma^{-1}\|v_h^{*,n}-v_h^{n-1}\|^2+\frac{\tau}{2}\|\nabla v_h^{*,n}\|^2 \leq \frac{\tau}{2}\|\nabla v_h^{n-1}\|^2 +\tau (f'(u^{n-1}_h)v^{n-1}_h, v_h^{*,n}-v_h^{n-1}).
\end{equation*}
 We invoke (\ref{yx1}) in the above equation to get
\begin{equation*}
	\gamma^{-1}\|v_h^{*,n}-v_h^{n-1}\|^2 + \frac{\tau}{2} \|\nabla v_h^n\|^2 \big(1+\|v_h^{*,n}-v_h^{n-1}\|^2 \big) \leq \frac{\tau}{2} \|\nabla v_h^{n-1}\|^2 +Q\tau \|v_h^{*,n}-v_h^{n-1}\|.
\end{equation*}
We apply the Young’s inequality $\displaystyle Q\tau \|v_h^{*,n}-v_h^{n-1}\|\leq Q\tau^2+\frac{\gamma^{-1}}{2}\|v_h^{*,n}-v_h^{n-1}\|^2$ to get
\begin{equation}\label{bb1}
	\gamma^{-1}\|v_h^{*,n}-v_h^{n-1}\|^2+\tau\|\nabla v_h^n\|^2 \leq \tau\|\nabla v_h^{n-1}\|^2+Q\tau^2.
\end{equation}
Sum over this equation from $n = 1$ to $n^* \leq m$ to obtain
$
	\|\nabla v_h^{n^*}\|^2 \leq \|\nabla v_h^{0}\|^2+ Q\tau n^* \leq Q\|\nabla v_{0}\|^2+ QT\leq Q,
$
 and we invoke this estimate back to (\ref{bb1}) to get $\|v_h^{*,n}-v_h^{n-1}\|^2\leq Q\tau$, which, together with (\ref{norm3}), gives $\big|\|v_h^{*,n}\|^2-1\big|\leq Q\tau$. We invoke these estimates and $\nabla v_h^{*,n}=\nabla v_h^{n}\|v_h^{*,n}\| $ to  immediately get $\|\nabla v_h^{*,n}\|\leq Q$.

To bound $\nabla u_h^{n}$,
we set $\chi_1 =u_h^n-u_h^{n-1}$ in (\ref{app}) and invoke the boundedness of $\|u_h^n\|$ and $\|\nabla v_h^n\|$ to derive
\begin{equation*}
	\begin{aligned}
&\beta^{-1}\|u_h^n-u_h^{n-1}\|^2 + \tau\|\nabla u_h^n\|^2 \\
&\qquad \leq \tau\|\nabla u_h^n\|\cdot \|\nabla u_h^{n-1}\|+2\tau\|\nabla v_h^{n-1}\|\cdot\|\nabla u_h^{n}\|\cdot\|u_h^n-u_h^{n-1}\| + Q\tau\|u_h^n-u_h^{n-1}\|\\
&\qquad \leq  Q\tau^2\|\nabla u_h^{n}\|^2+\beta ^{-1}\|u_h^n-u_h^{n-1}\|^2+\frac{\tau}{2}\|\nabla u_h^n\|^2  +\frac{\tau}{2}\|\nabla u_h^{n-1}\|^2+Q\tau^2,
\end{aligned}
\end{equation*}
which implies
$
	\|\nabla u_h^n\|^2-\|\nabla u_h^{n-1}\|^2\leq Q\tau\|\nabla u_h^{n}\|^2+Q\tau.
$
We sum this equation with respect to $n$ and apply the Gronwall inequality to get $\|\nabla u_h^n \| \leq Q $. 
  
\textit{Step 2: Well-posedness of scheme at $n=m+1$.}
We rewrite the scheme (\ref{app})  as
\begin{equation*}
\mathcal{A}_{m+1}(u_h^{m+1},\chi_1)=\big(\mathcal{F}^{m}, \chi_1\big),~~ \mathcal{B}_{m+1}(v_h^{*,m+1}, \chi_2)=\big(\mathcal{G}^{m}, \chi_2\big ),~~ v_h^{m+1} =\frac{ v_h^{*,m+1}}{\| v_h^{*,m+1}\|},
\end{equation*}
where bilinear forms $\mathcal{A}_{m+1}(\cdot,\cdot), \mathcal{B}_{m+1}(\cdot,\cdot):\,(H^1_0)^2\rightarrow \mathbb R$ and $\mathcal{F}^{m}$, $\mathcal{G}^{m}$ are given by
\begin{equation*}
	\begin{aligned}
\mathcal{A}_{m+1}(p, q)& :=\beta^{-1}(p, q)+\tau(\nabla p, \nabla q)-2\tau(\nabla p, \nabla v_h^{m})(v_h^{m}, q),~~p,q\in H^1_0,\\[0.05in]
\mathcal{F}^{m}& := u_h^{m}+\tau f(u_h^{m})-2\tau (f(u_h^{m}),v_h^{m})v_h^{m},\\[0.05in]
\mathcal{B}_{m+1}(p, q)& :=\gamma^{-1}(p,q)+\tau (\nabla p, \nabla q)-\tau (\nabla p, \nabla v_h^{m})(v_h^{m}, q),~~p,q\in H^1_0,\\[0.05in]
\mathcal{G}^{m}& :=v_h^{m}+\tau f'(u^{m}_h)v^{m}_h-\tau(f'(u^{m}_h)v^{m}_h, v^{m}_h)v^{m}_h.
	\end{aligned}
\end{equation*}
Based on the stability (\ref{BNablaV}) with $1\leq n\leq m$ and  \textit{Assumptions A} and \textit{C}, it is clear that $\mathcal{F}^{m}, \mathcal{G}^{m}\in L^2$ and
		$|\mathcal{A}_{m+1}(p,q)|\leq Q\|p\|_{H^1}\|q\|_{H^1}$, and 
\begin{equation*}
	\begin{aligned}
\mathcal{A}_{m+1}(p, p) &\geq \beta^{-1}\|p\|^2+ \tau \|\nabla p\|^2-Q\tau \|p\|\cdot \|\nabla p\|\\
	& \geq \beta^{-1}\|p\|^2+ \tau \|\nabla p\|^2-Q\tau \|p\|^2-\frac{\tau}{2}\|\nabla p\|^2.
	\end{aligned}
\end{equation*} 
Thus, if $\tau$ small enough such that $\beta^{-1}-Q\tau >0$, we get the coercivity.
We could similarly analyze $\mathcal{B}_n(\cdot, \cdot)$.
Consequently, we apply the Lax-Milgram Theorem to obtain the existence and uniqueness of the solutions to (\ref{app}) at $n=m+1$. Then by induction we reach the conclusions of the theorem.
\end{proof}

\subsection{Analysis of normalized projection}\label{sec43}
In conventional numerical analysis of finite element method for evolution equations, one usually introduce the interpolation or elliptic projection of solutions as an intermediate variable to split the errors and then perform estimates.
In error equation (\ref{error}), however, both $e_v^{n}$ and $e_v^{*,n}$ are presented due to the normalization and thus the  intermediate variable needs to be carefully designed  to facilitate numerical analysis.
For this purpose, we introduce the normalized projection  $\widetilde{Pv(t_n)}:= Pv(t_n)/\|Pv(t_n)\|$ and split the errors as
 \begin{align*}
 &e_v^n=\displaystyle v(t_n)-v_h^n=\big(v(t_n)-\widetilde{Pv(t_n)}\big)+\big(\widetilde{Pv(t_n)}-v_h^n\big):=\eta_v^n+\xi_v^n,\\
 &e_v^{*,n}=v(t_n)-v_h^{*,n}=\big(v(t_n)-\widetilde{Pv(t_n)}\big)+\big(\widetilde{Pv(t_n)}-v_h^{*,n}\big):=\eta_v^n+\xi_v^{*,n}.
 \end{align*}
 For $n=0$, we set $v_h^{*,0}=\widetilde{Pv_0}$ such that $\xi_v^{*,0}=0$.
 
 Now we will show that such splitting admits the following implicit relation that plays a key role in error estimates.

\begin{lemma}\label{li}
	If $\big(\widetilde{Pv(t_n)}, \widetilde{Pv(t_n)}-v_h^n\big)\leq 1$, then 
	$
		\|\xi_v^n\|\leq \|\xi_v^{*,n}\|+\|\xi_v^{*,n}\|^3.
	$
\end{lemma}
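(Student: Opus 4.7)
The plan is to reduce the claim to a one-variable algebraic inequality verifiable by elementary factorization. Set $w := \widetilde{Pv(t_n)}$, $r := \|v_h^{*,n}\|$, and $\alpha := (w, v_h^n)$. Both $w$ and $v_h^n$ are unit vectors (the former by definition of the normalized projection and the latter from the retraction step $v_h^n = v_h^{*,n}/\|v_h^{*,n}\|$ in (\ref{app})), so Cauchy--Schwarz yields $\alpha \in [-1,1]$, and the hypothesis $\big(w, w - v_h^n\big) \leq 1$ simplifies to $1 - \alpha \leq 1$, that is, $\alpha \geq 0$. Using $v_h^{*,n} = r v_h^n$, direct expansion of the two inner-product squares gives
\begin{align*}
\|\xi_v^n\|^2 = 2(1 - \alpha), \qquad \|\xi_v^{*,n}\|^2 = 1 - 2r\alpha + r^2 = (r - \alpha)^2 + (1 - \alpha^2) \geq 1 - \alpha^2,
\end{align*}
where the last inequality follows by dropping the non-negative completion-of-square term in $r$.

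Next I would square the target, so the claim becomes $\|\xi_v^n\|^2 \leq \|\xi_v^{*,n}\|^2 (1 + \|\xi_v^{*,n}\|^2)^2$. Because $t \mapsto t(1+t)^2$ is monotone nondecreasing on $[0,\infty)$, the above lower bound on $\|\xi_v^{*,n}\|^2$ yields $\|\xi_v^{*,n}\|^2 (1 + \|\xi_v^{*,n}\|^2)^2 \geq (1 - \alpha^2)(2 - \alpha^2)^2$, so the task reduces to the one-variable estimate $2(1 - \alpha) \leq (1 - \alpha^2)(2 - \alpha^2)^2$ for $\alpha \in [0, 1]$. The case $\alpha = 1$ is trivial since then $\xi_v^n = 0$; for $\alpha \in [0, 1)$, cancelling the positive factor $1 - \alpha$ on both sides reduces the statement to $2 \leq (1 + \alpha)(2 - \alpha^2)^2$.

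I would close the argument by splitting this as a product of two elementary estimates. First,
\begin{align*}
(1 + \alpha)(2 - \alpha^2) - 2 = 2\alpha - \alpha^2 - \alpha^3 = \alpha(1 - \alpha)(2 + \alpha) \geq 0 \quad \text{for } \alpha \in [0, 1],
\end{align*}
so $(1 + \alpha)(2 - \alpha^2) \geq 2$; second, the trivial bound $2 - \alpha^2 \geq 1$ on the same interval. Multiplying the two gives $(1+\alpha)(2-\alpha^2)^2 = [(1+\alpha)(2-\alpha^2)](2-\alpha^2) \geq 2 \cdot 1 = 2$, as required. The main obstacle is spotting the decomposition $\|\xi_v^{*,n}\|^2 = (r-\alpha)^2 + (1-\alpha^2)$, which removes the $r$-dependence in a way still compatible with the cubic slack in the target; this cubic slack is genuinely needed because $1 - \alpha^2$ degenerates to $0$ at $\alpha = 1$, which is exactly where the factorization $\alpha(1-\alpha)(2+\alpha)$ vanishes, so the reduced inequality is sharp precisely at the regime $v_h^n \to w$.
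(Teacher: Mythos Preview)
Your proof is correct. The route differs from the paper's. Both arguments hinge on the same first observation---written algebraically by you as $\|\xi_v^{*,n}\|^2=(r-\alpha)^2+(1-\alpha^2)\ge 1-\alpha^2$, and geometrically in the paper as the projection bound $\|\widetilde{Pv(t_n)}-\widehat{Pv(t_n)}\|\le\|\xi_v^{*,n}\|$ where $\widehat{Pv(t_n)}:=(\widetilde{Pv(t_n)},v_h^n)v_h^n$ is the orthogonal projection onto $\mathrm{span}\{v_h^n\}$, whose squared distance is exactly $1-\alpha^2$. From there the paths diverge: the paper bounds the \emph{difference} $\|\xi_v^n\|-\|\xi_v^{*,n}\|$ by isolating the factor $1-\sqrt{(1+\alpha)/2}$ and controlling it via the scalar inequality $1-\sqrt{(1+y)/2}\le(1-y^2)/2$, then chains this with $\|\xi_v^n\|\le 2\|\xi_v^{*,n}\|$ to produce the cube. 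You instead square the full target and reduce to the single polynomial inequality $2\le(1+\alpha)(2-\alpha^2)^2$, which you verify by the clean factorization $(1+\alpha)(2-\alpha^2)-2=\alpha(1-\alpha)(2+\alpha)$. Your approach is more elementary and self-contained; the paper's geometric framing makes the role of the projection $\widehat{Pv(t_n)}$ explicit and perhaps explains more transparently why a cubic (rather than, say, quadratic) correction suffices.
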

\begin{proof}
Let $\displaystyle \widehat{Pv(t_n)}=\big(\widetilde{Pv(t_n)}, v_h^n\big)v_h^n$ such that $\|\widetilde{Pv(t_n)}-\widehat{Pv(t_n)}\|\leq \|\widetilde{Pv(t_n)}-lv_h^n\|$ for any $l\in\mathbb R$. We select $l=\|v_h^{*,n}\|$ and recall $v_h^{*,n}=\|v_h^{*,n}\|v_h^n$ to get
\begin{align}\label{eq3}
		\|\widetilde{Pv(t_n)}-v_h^n\|-\|\widetilde{Pv(t_n)}-v_h^{*,n}\|&\leq \|\widetilde{Pv(t_n)}-v_h^n\|-\|\widetilde{Pv(t_n)}-\widehat{Pv(t_n)}\|\notag\\
		&=\|\widetilde{Pv(t_n)}-v_h^n\|\Big( 1-\frac{\|\widetilde {Pv(t_n)}-\widehat{Pv(t_n)}\|}{\|\widetilde{Pv(t_n)}-v_h^n\|}\Big).
	\end{align}
	As we suppose $\big(\widetilde{Pv(t_n)}, \widetilde{Pv(t_n)}-v_h^n\big)\leq 1$ in this lemma, we have 
\begin{align}\label{bb3}
0\leq 1-\Big(\widetilde{Pv(t_n)}, \widetilde{Pv(t_n)}-v_h^n\Big)=\big(\widetilde{Pv(t_n)}, v_h^n\big)\leq 1.
\end{align}
	We invoke this, $( \widehat{Pv(t_n)},v_h^n)=\big(\widetilde{Pv(t_n)}, v_h^n\big)$ and $\big(\widetilde{Pv(t_n)}, \widehat{Pv(t_n)}\big)=\big(\widetilde{Pv(t_n)}, v_h^n\big)^2=\|\widehat{Pv(t_n)}\|^2$ to get
\begin{align*}
	\|\widetilde{Pv(t_n)}-\widehat{Pv(t_n)}\|^2 &=1-2\big(\widetilde{Pv(t_n)}, \widehat{Pv(t_n)}\big)+\|\widehat{Pv(t_n)}\|^2=1-\big(\widetilde{Pv(t_n)}, v_h^n\big)^2\\
	&\geq \frac{1-\big(\widetilde{Pv(t_n)}, v_h^n\big)}{2} =\frac{\big\|\widetilde{Pv(t_n)}-v_h^n\big\|^2}{4},
\end{align*}
which, together with $\|\widetilde{Pv(t_n)}-\widehat{Pv(t_n)}\|\leq \|\widetilde{Pv(t_n)}-v_h^{*,n}\|$, leads to 
\begin{equation}\label{first}
	\frac{1}{2}\|\widetilde{Pv(t_n)}-v_h^n\| \leq \|\widetilde{Pv(t_n)}-\widehat{Pv(t_n)}\|\leq \|\widetilde{Pv(t_n)}-v_h^{*,n}\|.
\end{equation}
For $y\in [0,1]$, we have $y^2+1\leq y+1\leq \sqrt{2(y+1)}$, that is, $1-\sqrt{(1+y)/2}\leq (1-y^2)/2$. We apply this, $\big(\widetilde{Pv(t_n)}, \widehat{Pv(t_n)}\big)=\big(\widetilde{Pv(t_n)}, v_h^n\big)^2=\|\widehat{Pv(t_n)}\|^2$ and   (\ref{bb3})  to get
\begin{align}\label{second}
	1-\frac{\|\widetilde{Pv(t_n)}-\widehat{Pv(t_n)}\|}{\|\widetilde{Pv(t_n)}-v_h^n\|}& =1-\sqrt{\frac{1-\big(\widetilde{Pv(t_n)},v_h^n\big)^2}{2\big(1-(\widetilde{Pv(t_n)},v_h^n)\big)}}=1-\sqrt{\frac{1+(\widetilde{Pv(t_n)},v_h^n)}{2}}\nonumber\notag\\
	&\leq \frac{1-(\widetilde{Pv(t_n)},v_h^n)^2}{2}=\frac{1}{2}\|\widetilde{Pv(t_n)}-\widehat{Pv(t_n)}\|^2.
\end{align}
Invoking (\ref{first})--(\ref{second}) in (\ref{eq3}) completes our proof.
\end{proof}

\subsection{Auxiliary estimates}We derive several estimates based on \textit{Assumptions A} and \textit{C} (such that Theorem \ref{stable} could be applied) and the regularity condition $u,v\in H^1(0,T;H^2)\cap H^2(0,T;L^2)$ (such that $u$ and $v$ belong to $ C([0,T];L^\infty)$ and $ W^{1,\infty}(0,T;L^2)$).

\underline{Estimates related to $u$} We set $\chi_1=\xi_u^n$ in the first equation of (\ref{error}) to get
\begin{equation*}
	\begin{aligned}
|\mu_1^n|& = \big|2\big(\nabla v(t_{n}), \nabla u(t_{n})\big)(v(t_{n}), \xi_u^n)-2(\nabla v^{n-1}_h, \nabla u^{n}_h)(v^{n-1}_h, \xi_u^n)\big|\\
&=2\big|\big(\nabla v(t_{n})-\nabla v(t_{n-1})+\nabla e_v^{n-1}, \nabla u(t_{n})\big)(v(t_{n}), \xi_u^n)\\
&\quad+ (\nabla v_h^{n-1}, \nabla e_u^{n})\big(v(t_{n}), \xi_u^n\big)+ (\nabla v_h^{n-1}, \nabla u_h^n)\big(v(t_{n})-v(t_{n-1})+e_v^{n-1}, \xi_u^n\big)\big|\\
&\leq Q\|e_v^{n-1}\|\cdot \|\xi_u^n\|+Q\|\nabla \xi_u^{n}\|\cdot \|\xi_u^n\|+Q\tau \|\xi_u^n\|,
	\end{aligned}
\end{equation*}
where we have used the integration by parts 
$\big(\nabla v(t_{n})-\nabla v(t_{n-1})+\nabla e_v^{n-1}, \nabla u(t_{n})\big)=-\big( v(t_{n})- v(t_{n-1})+ e_v^{n-1}, \Delta u(t_{n})\big)$.
Similarly, we bound $\mu_2^n$ and $\mu_3^n$ as
\begin{equation*}
	\begin{aligned}
|\mu_2^n|& =\big|2\big(f(u(t_{n})), v(t_{n})\big)(v(t_{n}),\xi_u^n)-2\big(f(u^{n-1}_h), v^{n-1}_h\big)(v^{n-1}_h,\xi_u^n)\big|\\
&=2\big|\big(f(u(t_{n}))-f(u(t_{n-1}))+f(u(t_{n-1}))-f(u^{n-1}_h), v(t_{n})\big)(v(t_{n}),\xi_u^n)\\
&\quad+\big(f(u_h^{n-1}), v(t_{n})-v(t_{n-1})+e_v^{n-1}\big)(v(t_{n}),\xi_u^n)\\
&\quad +\big(f(u_h^{n-1}), v_h^{n-1}\big)(v(t_{n})-v(t_{n-1})+e_v^{n-1},\xi_u^n)\big|\\
&\leq Q\|u(t_{n-1})-u_h^{n-1}\|\cdot \|\xi_u^n\|+Q\|e_v^{n-1}\|\cdot \|\xi_u^n\|+Q\tau \|\xi_u^n\|,
	\end{aligned}
\end{equation*}
and $$
|\mu_3^n| =\big|\big(f(u(t_n))-f(u^{n-1}_h), \xi_u^n \big)\big|
		\leq Q\|e_u^{n-1}\|\cdot \|\xi_u^n\|+Q\tau \|\xi_u^n\|.$$
Invoke the above three estimates into the first equation of (\ref{error}) to derive
\begin{align}
\beta^{-1}\|\xi_u^n\|^2+\tau\|\nabla\xi_u^n\|^2 & \leq \beta^{-1} \|\xi_u^{n-1}\|\cdot\|\xi_u^{n}\|+Q\tau \|\nabla \xi_u^{n}\| \cdot\|\xi_u^{n}\|+\tau\|\psi^n_u\|\cdot\|\xi_u^{n}\|\notag\\[0.05in]
 &\quad+ Q\tau^2 \|\xi_u^n\|+Q\tau\|e_u^{n-1}\|\cdot\|\xi_u^{n}\| + Q\tau\| e_v^{n-1}\|\cdot\|\xi_u^{n}\|,\label{errorFullU}
\end{align}
where $\psi^n_u=-\beta^{-1}(\frac{\eta_u^n-\eta_u^{n-1}}{\tau}+R^n_u)$. By the Young’s inequality $Q \|\nabla \xi_u^{n}\| \cdot\|\xi_u^{n}\|\leq \|\nabla \xi_u^{n}\|^2+Q\|\xi_u^{n}\|^2$ in (\ref{errorFullU}), we cancel the  $\|\nabla \xi_u^{n}\|^2$ on both sides of (\ref{errorFullU}) and then cancel $\|\xi_u^n\|$ in each term to get
$$
	\|\xi_u^n\| \leq  \|\xi_u^{n-1}\|+Q\tau\|\xi_u^{n}\|+\tau\|\psi^n_u\|+ Q\tau^2 +Q\tau\|e_u^{n-1}\| + Q\tau\| e_v^{n-1}\|.
$$
We sum this equation from $n=1$ to $n^*\leq N$ and invoke the standard estimate
(see e.g. \cite[Equations 6.18 and 9.12]{ZheSICON}) 
\begin{equation*}
	\tau \sum_{n=1}^{N}\|\psi^n_u\|\leq Q\|u_{tt}\|_{L^1(0,T;L^2)}\tau+Q\|u_t\|_{L^1(0,T;H^2)}h^2,
\end{equation*}
 to get
$$\ds
	\|\xi_u^{n^*}\|\leq Q\tau \sum_{n=1}^{n^*}\|\xi_u^{n}\|+ Q\tau\sum_{n=1}^{n^*}\|e_v^{n-1}\|+Q\tau+Qh^2.
$$
Then for $\tau$ small enough such that $Q\tau<1$, we apply the Gronwall inequality to get
\begin{align}\label{bb5}
	\|\xi_u^{n}\|\leq  Q\tau\sum_{m=1}^{n}\|e_v^{m-1}\|+Q\tau+Qh^2,~~1\leq n\leq N.
\end{align}

\underline{Estimates related to $v$} We set $\chi_2=\xi_v^{*,n}$ in the second equation of (\ref{error}) and similarly bound $\nu_1^n$--$\nu_3^n$ as $\mu_1^n$--$\mu_3^n$
\begin{align*}
	|\nu_1^n|&=|(v(t_n),\xi_v^{*,n})(\nabla v(t_n),\nabla v(t_n))-(v_h^{n-1},\xi_v^{*,n})(\nabla v_h^{*,n}, \nabla v_h^{n-1})|\\
		&=|(v(t_n)-v(t_{n-1})+e_v^{n-1},\xi_v^{*,n})(\nabla v(t_n),\nabla v(t_n))\\
		&\quad +(v_h^{n-1},\xi_v^{*,n})(\nabla (v(t_n)-v(t_{n-1}))+ \nabla e_v^{n-1},\nabla v(t_n))\\
		&\quad +(v_h^{n-1},\xi_v^{*,n})(\nabla v_h^{n-1},\nabla e_v^{*,n})|\\
		&\leq Q\|e_v^{n-1}\|\cdot\|\xi_v^{*,n}\|+Q\|\nabla \xi_v^{*,n}\|\cdot\|\xi_v^{*,n}\|+Q\tau \|\xi_v^{*,n}\|;\\
|\nu_2^n|&=\big|\big(f'(u(t_{n}))v(t_{n}), \xi_v^{*,n} \big)-\big(f'(u^{n-1}_h)v^{n-1}_h,\xi_v^{*,n}\big)\big|\\
&=\big|\big((f'(u(t_{n}))-f'(u(t_{n-1}))+f'(u(t_{n-1}))-f'(u^{n-1}_h))v(t_{n}),\xi_v^{*,n} \big)\\
&\quad +\big(f'(u^{n-1}_h)(v(t_{n})-v(t_{n-1})+e_v^{n-1}), \xi_v^{*,n} \big)\big|\\
&\leq Q\|u(t_{n-1})-u_h^{n-1}\|\cdot \|\xi_v^{*,n}\|+Q\|e_v^{n-1}\|\cdot\|\xi_v^{*,n}\|+Q\tau \|\xi_v^{*,n}\|;\\
|\nu_3^n|& = \big|(v(t_{n}),\xi_v^{*,n})\big(f'(u(t_{n}))v(t_{n}), v(t_{n})\big)-(v^{n-1}_h,\xi_v^{*,n})\big(f'(u^{n-1}_h)v^{n-1}_h, v^{n-1}_h\big)\big|\\
& \leq Q\|u(t_{n-1})-u_h^{n-1}\|\cdot \|\xi_v^{*,n}\|+Q\|e_v^{n-1}\|\cdot\|\xi_v^{*,n}\|+Q\tau \|\xi_v^{*,n}\|.
\end{align*}
We invoke these in the second equation of (\ref{error}) and apply
$$ \big(\nabla e_v^{*,n},\nabla \xi_v^{*,n}\big)=\big(\delta_v^n,\nabla \xi_v^{*,n}\big)+\|\nabla\xi_v^{*,n}\|^2\geq \frac{1}{2} \|\nabla\xi_v^{*,n}\|^2-Q\|\delta_v^n\|^2$$ 
where $\delta_v^n:=\nabla (Pv(t_n)-\widetilde{Pv(t_n)})$ to get
\begin{equation}\label{zjy1}
	\begin{aligned}
&\gamma^{-1}\|\xi_v^{*,n}\|^2+\frac{\tau}{2}\|\nabla \xi_v^{*,n} \|^2  \leq  \gamma^{-1}\|\xi_v^{n-1}\|\cdot\|\xi_v^{*,n}\|+Q\tau \|\delta _v^n\|^2+Q\tau\|\psi^n_v\|\cdot\|\xi_v^{*,n}\|\\
&~+Q\tau \|\nabla \xi_v^{*,n}\|\cdot\|\xi_v^{*,n}\| +Q\tau \|e_u^{n-1}\|\cdot \|\xi_v^{*,n}\|  +Q\tau \|e_v^{n-1}\|\cdot\|\xi_v^{*,n}\|+Q\tau^2 \|\xi_v^{*,n}\|\\
&\leq \frac{\gamma^{-1}}{2}\|\xi_v^{n-1}\|^2+\frac{\gamma^{-1}}{2}\|\xi_v^{*,n}\|^2+Q\tau \|\delta _v^n\|^2+Q\tau\|\psi^n_v\|^2\\
&~+\frac{\tau}{2}\|\nabla \xi_v^{*,n}\|^2+Q\tau\|e_u^{n-1}\|^2+Q\tau\|e_v^{n-1}\|^2 +Q\tau^2\|\xi_v^{*,n}\|+Q\tau\|\xi_v^{*,n}\|^2,
	\end{aligned}
\end{equation}
where $\psi^n_v=-\gamma^{-1}(\frac{\eta_v^n-\eta_v^{n-1}}{\tau}+R^n_v)$.
\begin{lemma}\label{estimateB}
If $v\in H^1(0,T;H^2)\cap H^2(0,T;L^2)$, then
  $
		\tau \sum_{n=1}^{N}\|\psi^n_v\|^2   \leq  Q(\tau^2+h^4)
	$ for $\tau$ small enough and $\|\eta_v^n\|+\|\delta _v^n\|\leq Qh^2$ for $1\leq n\leq N$.
\end{lemma}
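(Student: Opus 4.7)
The plan is to dispatch the two assertions of the lemma separately, handling first the static bound $\|\eta_v^n\|+\|\delta_v^n\|\leq Qh^2$ and then the truncation-error sum. Write $\eta_v^n = (v(t_n) - Pv(t_n)) + (Pv(t_n) - \widetilde{Pv(t_n)})$. The first summand is bounded by $Qh^2\|v(t_n)\|_{H^2}$ directly from (\ref{eta1}). For the second, observe that $Pv(t_n) - \widetilde{Pv(t_n)} = Pv(t_n)\bigl(1 - 1/\|Pv(t_n)\|\bigr)$, so its $L^2$ norm equals $\bigl|\|Pv(t_n)\|-1\bigr|$. Using the norm constraint $\|v(t_n)\|=1$ and reverse triangle inequality, $\bigl|\|Pv(t_n)\|-1\bigr|=\bigl|\|Pv(t_n)\|-\|v(t_n)\|\bigr|\leq \|Pv(t_n)-v(t_n)\|\leq Qh^2$. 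For $\delta_v^n$, since $\widetilde{Pv(t_n)}$ differs from $Pv(t_n)$ by a scalar multiple, $\delta_v^n = (1-1/\|Pv(t_n)\|)\nabla Pv(t_n)$; combining the scalar bound $\bigl|1-1/\|Pv(t_n)\|\bigr|\leq Qh^2$ (valid because $\|Pv(t_n)\|$ stays close to $1$ for $h$ small) with the uniform bound on $\|\nabla Pv(t_n)\|$ yields $\|\delta_v^n\|\leq Qh^2$.

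For the truncation sum, split $\psi^n_v = -\gamma^{-1}\bigl(\tfrac{\eta_v^n-\eta_v^{n-1}}{\tau}+R_v^n\bigr)$ and treat the two pieces separately. The piece involving $R_v^n$ is routine: the integral remainder formula gives $\tau\sum_n\|R_v^n\|^2\leq Q\tau^2\|v_{tt}\|^2_{L^2(0,T;L^2)}$. For the finite-difference-of-$\eta_v$ term I would rewrite $\tfrac{\eta_v^n-\eta_v^{n-1}}{\tau} = \tfrac{1}{\tau}\int_{t_{n-1}}^{t_n}\eta_{v,t}(s)\,ds$ and use Cauchy--Schwarz to telescope the sum into $\|\eta_{v,t}\|^2_{L^2(0,T;L^2)}$. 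The task then reduces to showing $\|\eta_{v,t}\|_{L^2(0,T;L^2)}\leq Qh^2$.

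This is where I expect the main obstacle: the normalization makes $\eta_v$ nonlinear in $v$, so a naive differentiation of $\widetilde{Pv}=Pv/\|Pv\|$ produces a term $Pv\,(Pv,Pv_t)/\|Pv\|^3$ whose size is not immediately $O(h^2)$. The trick is to exploit the continuous norm constraint $\|v(t)\|=1$, which implies $(v,v_t)=0$, and then expand
\begin{equation*}
(Pv,Pv_t) = (Pv,Pv_t)-(v,v_t) = (Pv-v,Pv_t) + (v,Pv_t-v_t),
\end{equation*}
so that both pieces are bounded by $Qh^2$ thanks to (\ref{eta1}) applied to $v$ and $v_t$ and the uniform bound on $\|Pv_t\|$. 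Combined with the elementary bounds $\|v_t-Pv_t\|\leq Qh^2\|v_t\|_{H^2}$ and $|1-1/\|Pv\||\leq Qh^2$, this yields a pointwise estimate $\|\eta_{v,t}(t)\|\leq Qh^2\bigl(\|v(t)\|_{H^2}+\|v_t(t)\|_{H^2}+\|v_t(t)\|\bigr)$; integrating in time over $[0,T]$ under the hypothesis $v\in H^1(0,T;H^2)$ delivers $\|\eta_{v,t}\|_{L^2(0,T;L^2)}\leq Qh^2$. Adding this $h^4$ contribution to the $\tau^2$ contribution from $R_v^n$ completes the proof.
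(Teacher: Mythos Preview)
Your proposal is correct. The static bounds on $\|\eta_v^n\|$ and $\|\delta_v^n\|$ and the treatment of $R_v^n$ match the paper's proof exactly. For the difference $\eta_v^n-\eta_v^{n-1}$, however, you take a somewhat different route: you pass to the continuous time derivative $\eta_{v,t}$, write $\frac{\eta_v^n-\eta_v^{n-1}}{\tau}=\frac{1}{\tau}\int_{t_{n-1}}^{t_n}\eta_{v,t}\,ds$, and bound $\|\eta_{v,t}(t)\|$ pointwise by $Qh^2$ using the identity $(v,v_t)=0$ to control $(Pv,Pv_t)$. The paper instead stays at the discrete level, splitting $\eta_v^n-\eta_v^{n-1}=A_1+A_2$ where $A_1$ is the standard $(I-P)(v(t_n)-v(t_{n-1}))$ piece and $A_2$ collects the normalization contributions; $A_2$ is then further decomposed into $B_1+B_2$ and each piece is bounded by integrating appropriate time derivatives over $[t_{n-1},t_n]$. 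In the paper's $B_1$ computation the key quantity is $(\|Pv\|-\|v\|)_s$, handled via a three-term telescoping of $\frac{(Pv,Pv_s)}{\|Pv\|}-\frac{(v,v_s)}{\|v\|}$ rather than by invoking $(v,v_s)=0$ explicitly. The two arguments use the same ingredients (projection error, the constraint $\|v\|\equiv 1$, and $v\in H^1(0,T;H^2)$) and arrive at the same $O(h^4)$ contribution; your continuous differentiation is a bit more economical, while the paper's discrete splitting makes the individual error sources slightly more transparent.
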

\begin{proof}
Through $\big|\|Pv(t_n)\|-1\big|=\big|\|Pv(t_n)\|-\|v(t_n)\|\big|\leq \|Pv(t_n)-v(t_n)\|\leq Qh^2\|v(t_n)\|_{H^2}$, we have $\|Pv(t_n)\|\geq \frac{1}{2}$ for $h$ small enough. Then we have
 \begin{equation*}\label{bb9}
 	 \begin{aligned}
	\|\eta_v^n\|&=\frac{1}{\|Pv(t_n)\|}\big\|v(t_n)\|Pv(t_n)\|-Pv(t_n)\big\| \\[0.05in]
	& = \frac{\big\| v(t_n)\|Pv(t_n)\|- Pv(t_n)\|Pv(t_n)\| +Pv(t_n)\|Pv(t_n)\|- Pv(t_n)\big\|}{\|Pv(t_n)\|} \\[0.05in]
	&\leq \|v(t_n)-Pv(t_n)\|+\big|\|Pv(t_n)\|-1\big|\ \leq 2\|v(t_n)-Pv(t_n)\|\leq Qh^2.
\end{aligned}
 \end{equation*} 
By $\|Pv(t_n)\|\geq \frac{1}{2}$ and $\|\nabla Pv(t_n)\|\leq \|\nabla v(t_n)\|$, we have
\begin{equation*}
	\|\delta_v^n\|  =\frac{\|\nabla Pv(t_n)\|}{\|Pv(t_n)\|}\cdot \big|\|Pv(t_n)\|-1\big| \leq Q\|Pv(t_n)-v(t_n)\| \leq Q h^2.
 \end{equation*}
The $R_v^n$ in $\psi_v^n$ could be bounded as
\begin{equation*}
	\tau \sum_{n=1}^{N}\|R^n_v\|^2=\tau \sum_{n=1}^{N}\Big\|\frac{1}{\tau}\int_{t_{n-1}}^{t_n}\int_s^{t_n}v_{yy}(y)dyds\Big\|^2\leq Q\|v_{t}\|^2_{L^2(0,T;L^2)}\tau^2.
\end{equation*}
To estimate $\eta_v^n-\eta_v^{n-1}$ in $\psi_v^n$, we employ the splitting
 \begin{equation*}
 	\begin{aligned}
\eta_v^n-\eta_v^{n-1}&=\Big(v(t_n)-\frac{Pv(t_n)}{\|Pv(t_n)\|}\Big)-\Big(v(t_{n-1})-\frac{Pv(t_{n-1})}{\|Pv(t_{n-1})\|}\Big)\\
		&\hspace{-0.4in}=\Big[\big(v(t_n)-Pv(t_n)\big)-\big(v(t_{n-1})-Pv(t_{n-1})\big)\Big]\\
		&\hspace{-0.4in}\quad+\Big[\frac{Pv(t_n)}{\|Pv(t_n)\|}\big(\|Pv(t_n)\|-1\big)- \frac{Pv(t_{n-1})}{\|Pv(t_{n-1})\|}\big(\|Pv(t_{n-1})\|-1\big) \Big]=:A_1+A_2.
 	\end{aligned}
 \end{equation*}
We bound $A_1$ following the standard manner
\begin{equation*}
	\tau ^{-1}\sum_{n=1}^N \|A_1\|^2=\tau ^{-1}\sum_{n=1}^N \Big\|\int _{t_{n-1}}^{t_{n}} (I-P)v_s(s)ds \Big\|^2 \leq Q\|v_{t}\|^2_{L^2(0,T;H^2)}h^4.
\end{equation*}
To bound $A_2$, we employ a further splitting
\begin{equation*}
\begin{aligned}
|A_2| 
	&\leq \Big|\frac{Pv(t_n)}{\|Pv(t_n)\|}\big(\|Pv(t_n)\|-1-(\|Pv(t_{n-1})\|-1))\Big|\\
	&\quad +\Big|\big(\|Pv(t_{n-1})\|-1\big)\Big(\frac{Pv(t_{n})}{\|Pv(t_{n}))\|}-\frac{Pv(t_{n-1})}{\|Pv(t_{n-1}))\|}\Big) \Big|=:B_1+B_2.
	\end{aligned}
\end{equation*}
We apply $ \|Pv_s\|\leq \|v_s\|+Q\|v_s\|_{H^2}h^2\leq Q\|v_s\|_{H^2} $ to get
\begin{equation*}
	\begin{aligned}
\|B_1\|& \leq \big|\|Pv(t_n)\|-\|v(t_n)\|-(\|Pv(t_{n-1})\|-\|v(t_{n-1})\|)\big|\\[0.05in]
	&=\Big| \int_{t_{n-1}}^{t_n}\!\! \big(\|Pv(s)\|-\|v(s)\|\big)_s ds \Big|\leq \int_{t_{n-1}}^{t_n} \Big| \frac{\big(Pv(s), Pv_s(s)\big)}{\|Pv(s)\|}\!-\!\frac{\big(v(s), v_s(s)\big)}{\|v(s)\|}\Big|ds\\[0.05in]
	&=  \int_{t_{n-1}}^{t_n} \Big|\frac{\big(Pv(s)-v(s), Pv_s(s)\big)}{\|Pv(s)\|}+\frac{\big( Pv_s(s)-v_s(s),v(s)\big)}{\|Pv(s)\|}\\[0.05in]
 &+\big(v(s), v_s(s)\big)\frac{\|v(s)\|\!-\!\|Pv(s)\|}{\|v(s)\|\!\cdot\!\|Pv(s)\|}\Big|ds\leq Qh^2\!\! \int_{t_{n-1}}^{t_n}\!\! \!\! \|Pv_s(s)\|+\|v\|_{H^2}+\|v_s\|_{H^2}ds\\
& \leq Qh^2 \int_{t_{n-1}}^{t_n}\|v\|_{H^2}+\|v_s\|_{H^2}ds,
	\end{aligned}
\end{equation*}
leading to $\tau ^{-1}\sum_{n=1}^N \|B_1\|^2\leq Qh^4\|v\|^2_{H^1(0,T;H^2)}$. We reformulate $B_2$ as
\begin{equation*}
	B_2= \big|\|Pv(t_{n-1})\|-\|v(t_{n-1})\| \big| \cdot \Big|\int_{t_{n-1}}^{t_n}\Big(\frac{Pv(s)}{\|Pv(s)\|}\Big)_s ds\Big|.
\end{equation*}
We incorporate $\|Pv\|\leq \|v\|+Qh^2\|v\|_{H^2}\leq Q$ to get
 \begin{equation*}
 	\begin{aligned}
 		\Big\|\Big(\frac{Pv(s)}{\|Pv(s)\|}\Big)_s\Big\|=\Big\|\frac{P  v_s(s)}{\|Pv(s)\|} - \frac{Pv(s) (Pv,Pv_s)}{\|Pv(s)\|^3}\Big\|\leq Q\|v_s\|_{H^2},
 	\end{aligned}
 \end{equation*}
 which implies  $\tau ^{-1}\sum_{n=1}^N \|B_2\|^2\leq Qh^4\|v\|^2_{H^1(0,T;H^2)}$.
We combine the above estimates to complete the proof.
\end{proof}

We invoke $\|e_v^{n-1}\|\leq \|\eta_v^{n-1}\|+\|\xi_v^{n-1}\|$, (\ref{bb5}), $Q\tau^2\|\xi_v^{*,n}\|\leq Q\tau\|\xi_v^{*,n}\|^2+Q\tau^3$ and Lemma \ref{estimateB} in (\ref{zjy1}) to get
\begin{align*}
&\|\xi_v^{*,n}\|^2
\leq \|\xi_v^{n-1}\|^2+Q\tau\|\xi_v^{n-1}\|^2\nonumber\\
& ~\quad+Q\tau^2\sum_{m=1}^n\|\xi_v^{m-1}\|^2 +Q\tau\|\xi_v^{*,n}\|^2+Q\tau\|\psi^n_v\|^2+Q\tau(\tau^2+h^4).
\end{align*}
Sum this equation from $n=1$ to $n^*\leq N$ to get
\begin{align*}
&\sum_{n=1}^{n^*}\|\xi_v^{*,n}\|^2
\leq \sum_{n=1}^{n^*}\|\xi_v^{n-1}\|^2+Q\tau\sum_{n=1}^{n^*}\|\xi_v^{n-1}\|^2+Q\tau\sum_{n=1}^{n^*}\|\xi_v^{*,n}\|^2+Q(\tau^2+h^4),
\end{align*}
that is, for $\tau$ small enough,
\begin{align}
\sum_{n=1}^{m}\|\xi_v^{*,n}\|^2
&\leq \sum_{n=1}^{m}\|\xi_v^{n-1}\|^2+\frac{2Q\tau}{1-Q\tau}\sum_{n=1}^{m}\|\xi_v^{n-1}\|^2+Q(\tau^2+h^4)\nonumber\\
&\leq \sum_{n=1}^{m}\|\xi_v^{n-1}\|^2+\tilde Q\tau\sum_{n=1}^{m}\|\xi_v^{n-1}\|^2+\tilde Q(\tau^2+h^4),~~1\leq m\leq N,\label{zjy4}
\end{align}
for some constant $\tilde Q>0$.
\subsection{Error estimates}
We derive error estimates based on an induction procedure in the following theorem.
\begin{theorem}\label{thm44}
Under Assumptions A and C and the regularity condition $u,v\in H^1(0,T;H^2)\cap H^2(0,T;L^2)$, the following error estimate  holds for $1\leq n\leq N$
$$\|u(t_n)-u_h^n\|+\|v(t_n)-v_h^n\|\leq Q(\tau+h^2)$$
 if $h$ and $\tau$ are small enough and satisfy $h=O(\tau^{\frac{1+\varepsilon}{4}})$ for $0<\varepsilon\ll 1$.
\end{theorem}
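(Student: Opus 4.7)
The proof proceeds by an induction on $n$ whose hypothesis is designed to validate the error-splitting identity of Lemma \ref{li} at the current step, as anticipated in the introduction. I adopt the hypothesis $\|\xi_v^k\|\leq M_0(\tau+h^2)$ for $0\leq k\leq n-1$ with a constant $M_0$ to be fixed at the end of the analysis; the base case $n=0$ is immediate because $\xi_v^0=0$ by the convention $v_h^0=\widetilde{Pv_0}$.

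For the inductive step, I first use the per-step form of the inequality that leads to (\ref{zjy4}) (obtained by dropping the summation in (\ref{zjy1}) and absorbing $\gamma^{-1}\|\xi_v^{*,n}\|^2/2$ and $\tau\|\nabla\xi_v^{*,n}\|^2/2$ into the right-hand side) together with the induction hypothesis and the bounds $\|\eta_v^k\|+\|\delta_v^k\|\leq Qh^2$ to obtain a preliminary smallness estimate $\|\xi_v^{*,n}\|\leq Q_1(\tau+h^2)$. This ensures $(\widetilde{Pv(t_n)},v_h^n)=1-O(\|\xi_v^{*,n}\|)\geq 0$ for $\tau,h$ small, verifying the hypothesis of Lemma \ref{li}. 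Squaring the lemma then yields $\|\xi_v^n\|^2\leq (1+\|\xi_v^{*,n}\|^2)^2\|\xi_v^{*,n}\|^2$. The scaling $h=O(\tau^{(1+\varepsilon)/4})$ forces $(\tau+h^2)^2=O(\tau^2+h^4)=O(\tau)$, whence $\|\xi_v^{*,n}\|^2=O(\tau)$ and hence $(1+\|\xi_v^{*,n}\|^2)^2=1+O(\tau)$.

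Combining these ingredients produces the one-step recurrence
\begin{equation*}
\|\xi_v^n\|^2\leq (1+C\tau)\|\xi_v^{n-1}\|^2+C\tau\big(\|\psi_v^n\|^2+\|\xi_u^{n-1}\|^2+h^4+\tau^2\big),
\end{equation*}
where I have used $\|e_v^{n-1}\|\leq Qh^2+\|\xi_v^{n-1}\|$ and $\|\delta_v^n\|\leq Qh^2$, the $\|\xi_v^{n-1}\|^2$ contribution being absorbed into the Gronwall factor. Squaring (\ref{bb5}) gives $\|\xi_u^{n-1}\|^2\leq Q(\tau^2+h^4)+Q\tau\sum_{m=0}^{n-2}\|\xi_v^m\|^2$, whose last piece again feeds into the Gronwall machinery. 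The remaining step-wise forcings accumulate, upon multiplying by $\tau$ and summing over $n$, as $\tau\sum\|\psi_v^n\|^2\leq Q(\tau^2+h^4)$ via Lemma \ref{estimateB}, $\tau\sum h^4\leq Th^4$, and $\tau\sum\tau^2\leq T\tau^2$. Discrete Gronwall then delivers $\|\xi_v^n\|^2\leq C(\tau^2+h^4)$, so that $\|\xi_v^n\|\leq C(\tau+h^2)$; fixing $M_0\geq C$ closes the induction. Combining with $\|\eta_v^n\|\leq Qh^2$ and re-using (\ref{bb5}) together with $\|\eta_u^n\|\leq Qh^2$ yields the stated bound on $\|e_u^n\|+\|e_v^n\|$.

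The principal obstacle is the cubic correction $\|\xi_v^{*,n}\|^3$ coming from Lemma \ref{li}: absent the scaling $h=O(\tau^{(1+\varepsilon)/4})$, the amplification factor $(1+\|\xi_v^{*,n}\|^2)^2$ in the one-step recurrence cannot be compressed to $1+O(\tau)$, and the resulting Gronwall constant would not close the induction. A secondary technical point is the verification of the mild angle condition of Lemma \ref{li} at step $n$; the induction hypothesis is tuned precisely so that a preliminary bound on $\xi_v^{*,n}$ is obtainable before Lemma \ref{li} is invoked, which is the non-standard feature of the induction alluded to in the introduction.
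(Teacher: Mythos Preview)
Your argument is correct, but the mechanics differ from the paper's. The paper inducts on the starred quantity, assuming $\|\xi_v^{*,n}\|^2\leq Q_0(1+\tau)^n(\tau^2+h^4)$ for $n\leq n^*-1$; it then substitutes Lemma~\ref{li} (in the form $\|\xi_v^{n-1}\|^2\leq \|\xi_v^{*,n-1}\|^2+2\|\xi_v^{*,n-1}\|^4+\|\xi_v^{*,n-1}\|^6$) into the \emph{summed} inequality~(\ref{zjy4}), telescopes to isolate $\|\xi_v^{*,m}\|^2$, applies Gronwall to obtain (\ref{zjy6}), and bounds $\sum\|\xi_v^{*,n-1}\|^4$ and $\sum\|\xi_v^{*,n-1}\|^6$ explicitly via the induction hypothesis; the induction closes by a continuity argument showing $r(\tau)\to r(0)<Q_0$. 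You instead induct on the normalized quantity $\|\xi_v^k\|$, first extracting a per-step preliminary bound on $\|\xi_v^{*,n}\|$ (which both validates Lemma~\ref{li} at step $n$ and quantifies the amplification $(1+\|\xi_v^{*,n}\|^2)^2=1+O(\tau)$), and then run a one-step Gronwall recurrence directly on $\|\xi_v^n\|^2$. The paper's route makes the role of the quartic and sextic corrections---and hence the necessity of $h^4/\tau\to 0$---fully explicit; your route is more streamlined and avoids the summed form, absorbing the cubic correction multiplicatively rather than additively.

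One point you should make explicit: the amplification factor $(1+\|\xi_v^{*,n}\|^2)^2$ depends on $M_0$ through your preliminary bound $\|\xi_v^{*,n}\|^2\leq Q_1(M_0)^2(\tau+h^2)^2$, so the constant $C$ in your recurrence is a priori $M_0$-dependent. The non-circularity relies on the order of choices: first fix the Gronwall constant $C_*$ coming from the $M_0$-independent part of the recurrence, set $M_0=\sqrt{C_*}$, and only then choose $\tau$ small enough (now depending on $M_0$) so that $Q_1(M_0)^2(\tau+h^2)^2\leq \tau$, which is exactly where the scaling $h^4=O(\tau^{1+\varepsilon})$ enters. Your sentence ``fixing $M_0\geq C$ closes the induction'' elides this, and a reader could mistake it for circular reasoning.
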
 
\begin{remark}\label{rem4}
Similar to Corollary \ref{cor1}, the error estimate indicates that $u_h^N$ could approximate the index-1 saddle point $u^*$ with an arbitrarily small $L^2$ error such that the fully-discrete scheme (\ref{app}) is index-preserving for $T$ large enough and $\tau$ and $h$ small enough.
\end{remark}
\begin{proof}
%
%
%
In Section \ref{sec43} we have set $v_h^{*,0}=\widetilde{Pv_0}$ such that $\xi_v^{*,0}=0$. As $h=O(\tau^{\frac{1+\varepsilon}{4}})$, there exists a constant $C_2>0$ such that $h^4\leq C_2\tau^{1+\varepsilon}$. Suppose
\begin{equation}\label{bb8}
	\|\xi_v^{*,n}\|^2 \leq Q_0(1+\tau)^{n}(\tau^2+h^4),\quad 1\leq n\leq n^*-1,
\end{equation}
for some $Q_0> e^{ \tilde QT}\tilde Q$,
and we intend to prove this relation for $n=n^*$ to complete the induction. As (\ref{bb8}) holds for $1\leq n\leq n^*-1$, we have $\|\widetilde{Pv(t_n)}-v_h^{*,n}\|^2\leq Q_0e^{T}(\tau^2+h^4)$, that is,
$$1-Q_0e^{T}(\tau^2+h^4) +\|v_h^{*,n}\|^2\leq  2(\widetilde{Pv(t_n)},v_h^{*,n}), $$
which implies $(\widetilde{Pv(t_n)},v_h^{*,n})\geq 0$ for $\tau$ and $h$ small enough. We divide this equation by $\|v_h^{*,n}\|$ to get $(\widetilde{Pv(t_n)},v_h^{n})\geq 0$, which implies $(\widetilde{Pv(t_n)},v_h^{n})+1\geq 1=\|\widetilde{Pv(t_n)}\|^2$, i.e. $\big(\widetilde{Pv(t_n)}, \widetilde{Pv(t_n)}-v_h^n\big)\leq 1$. Thus we could invoke Lemma \ref{li}, which implies $\|\xi_v^n\|^2\leq \|\xi_v^{*,n}\|^2+2\|\xi_v^{*,n}\|^4+\|\xi_v^{*,n}\|^6$ for $1\leq n\leq n^*-1$, in (\ref{zjy4}) to get
\begin{equation*}
	\begin{aligned}
\|\xi_v^{*,m}\|^2
&\leq \sum_{n=1}^{m}(2\|\xi_v^{*,n-1}\|^4+\|\xi_v^{*,n-1}\|^6)\\
&+\tilde Q\tau\sum_{n=1}^{m}(\|\xi_v^{*,n-1}\|^2+2\|\xi_v^{*,n-1}\|^4+\|\xi_v^{*,n-1}\|^6)+\tilde Q(\tau^2+h^4)\\
&\leq \tilde Q\tau\sum_{n=1}^{m}\|\xi_v^{*,n-1}\|^2+3\sum_{n=1}^{m}(\|\xi_v^{*,n-1}\|^4+\|\xi_v^{*,n-1}\|^6)+\tilde Q(\tau^2+h^4),
\end{aligned}
\end{equation*}
for $1\leq m\leq n^*$, where we used $\tilde Q\tau\leq 1/2$ for $\tau$ small enough.
An application of the Gronwall inequality yields
\begin{align}\label{zjy6}
\|\xi_v^{*,m}\|^2
&\leq 3e^{ \tilde QT}\sum_{n=1}^{m}(\|\xi_v^{*,n-1}\|^4+\|\xi_v^{*,n-1}\|^6)+e^{ \tilde QT}\tilde Q(\tau^2+h^4),~~1\leq m\leq n^*.
\end{align}
We invoke the induction hypothesis and  $h^4\leq C_2\tau^{1+\varepsilon}$ to get
\begin{align*}
	& \sum_{n=1}^{n^*} \|\xi_v^{*,n-1}\|^4\leq  Q_0^2(\tau^2+h^4)^2\sum_{n=1}^{n^*}(1+\tau)^{2(n-1)}=Q_0^2(\tau^2+h^4)^2\frac{(1+\tau)^{2n^*}-1}{(1+\tau)^2-1}\\
	&\quad\leq Q_0^2(\tau^2+h^4)^2\frac{(1+\tau)^{2n^*}}{2\tau}\leq Q_0^2(\tau^2+h^4)\frac{(\tau+C_2\tau^{\varepsilon})}{2}(1+\tau)^{n^*}e^{T};\\
	& \sum_{n=1}^{n^*} \|\xi_v^{*,n-1}\|^6\leq Q_0^3(\tau^2+h^4)^2\frac{(\tau+C_2\tau^{\varepsilon})}{3}(1+\tau)^{n^*}e^{2T}.
\end{align*}
We incorporate the above estimates in (\ref{zjy6}) with $m=n^*$  to get $\|\xi_v^{*,n^*}\|^2 \leq r(\tau) (1+\tau)^{n^*}(\tau^2+h^4)$ where
\begin{align*}
	r(\tau):=3e^{ \tilde QT}(\tau+C_2\tau^{\varepsilon})\Big(Q_0^2\frac{1}{2}e^{T}+Q_0^3(\tau^2+h^4)\frac{1}{3}e^{2T}\Big)+e^{ \tilde QT}\tilde Q .
\end{align*}
As $r(\tau)$ is a continuous function of $\tau$ with $r(0)=e^{ \tilde QT}\tilde Q<Q_0$, then for $\tau$ small enough we have $r(\tau)\leq Q_0$, which, together with the above estimate, leads to (\ref{bb8}) with $n=n^*$. Thus (\ref{bb8}) holds for $1\leq n\leq N$ by induction, leading to $\|\xi_v^{*,n}\|^2 \leq Q(\tau^2+h^4)$ for $1\leq n\leq N$. We invoke this in Lemma \ref{li} to get
$\|\xi_v^{n}\|^2 \leq Q(\tau^2+h^4)$, which, together with the estimate of $\eta_v^n$ in (\ref{bb9}), gives $\|e_v^{n}\|^2 \leq Q(\tau^2+h^4)$. We invoke this back to (\ref{bb5}) to get $\|\xi_u^{n}\| \leq Q(\tau+h^2)$, which, together with $\|\eta_u^n\|\leq Qh^2$, leads to $\|e_u^{n}\| \leq Q(\tau+h^2)$. The proof is thus completed.
\end{proof}

\section{Numerical experiments} We present numerical examples to show the performance of the scheme (\ref{app}) and substantiate the theoretical results. The uniform partitions are used for both time period $[0,T]$  with $T=5$ and space domain $(0,\pi)^d$ with time step size $\tau=T/N$ and spatial mesh size $h=\pi/M$ for some integers $N$, $M>0$. The errors are measured by $\text{Err}(u):=\max\limits_{1\leq n\leq N}\|u(t_n)-u_h^n\|$ and $\text{Err}(v):=\max\limits_{1\leq n\leq N}\|v(t_n)-v_h^n\|$, and we define the vector $F(u_h(T))\in\mathbb R^{(M-1)^d}$ as  $F(u_h(T))_i:=(\Delta_h u_h(T))(x_i)+f(u_h(x_i,T))$ for $1\leq i\leq (M-1)^d$. Here $x_i$ denotes the $i$th internal node and $\Delta_h$ denotes the discrete Laplacian determined from the finite element scheme $(\Delta_h u_h(T),\chi)=-(\nabla u_h(T),\nabla\chi)$ for any $\chi\in S_h$ \cite[Equation 1.33]{Tho}. Thus $F(u_h(T))$ is used to measure the degree of $u_h(T)$ in satisfying the equation (and thus in approximating the saddle point). We always set $\beta =\gamma =1$.

 \textbf{Example 1: One-dimensional case.} We first adopt the example in \cite{xie05}, which corresponds to model (\ref{elliptic}) with $d=1$ and $f(u)=u^4-10u^2$.
We test the performance of the scheme (\ref{app}) under $\tau=10^{-3}$, $h=\pi/5000$, $v_0=\sin x$ and different $u_0$ in Fig. \ref{fig1} (a)--(b), which shows that the method successfully converges to two different saddle points from two different initial guesses. The $\|F(u_h(T))\|_{l^\infty}$ for cases (a)--(b) are $[3.50, 7.00]\times 10^{-4}$, respectively,  and the first two smallest eigenvalues of the Hessian at $u_h(T)$ are $[-111.27,0.02]$ for case (a) and $[-0.27, 7.64]$ for case (b), indicating that the scheme (\ref{app}) preserves the indexes of saddle points. 

We also pick $f(u)=4.1^2 \sin u$ based on the suggestions at the bottom of Remark \ref{rem33}, under which  the hypothesis for the index-preservation issue in Corollary \ref{cor1} is valid. We select the same parameters as above, and different initial values $u_0$  are used in Fig. \ref{fig1} (c)--(d), which demonstrates that the method successfully converges to two different index-1 saddle points from two distinct initial guesses. The $\|F(u_h(T))\|_{l^\infty}$ for cases (c)--(d) are $[1.10, 0.30]\times 10^{-3}$, respectively,  and the first two smallest eigenvalues of the Hessian at $u_h(T)$ are $[-0.23, 16.37]$ for case (c) and $[-1.49, 3.52]$ for case (d), again indicating that the scheme (\ref{app}) preserves the indexes of saddle points.

\vspace{-0.2in}

\begin{figure}[H]
\centering
\subfloat[$u_0=\sin x$]{\label{figa1} 
    \includegraphics[width=0.4\linewidth]{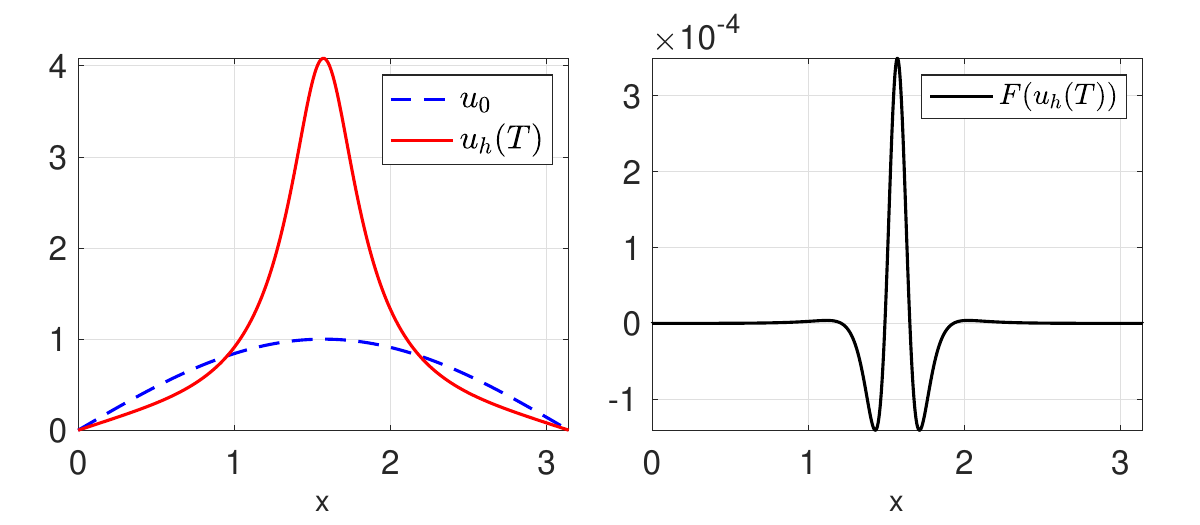}} 
    \hspace{0.2in}
\subfloat[$u_0=-3\sin 3x$]{\label{figa2}
    \includegraphics[width=0.4\linewidth]{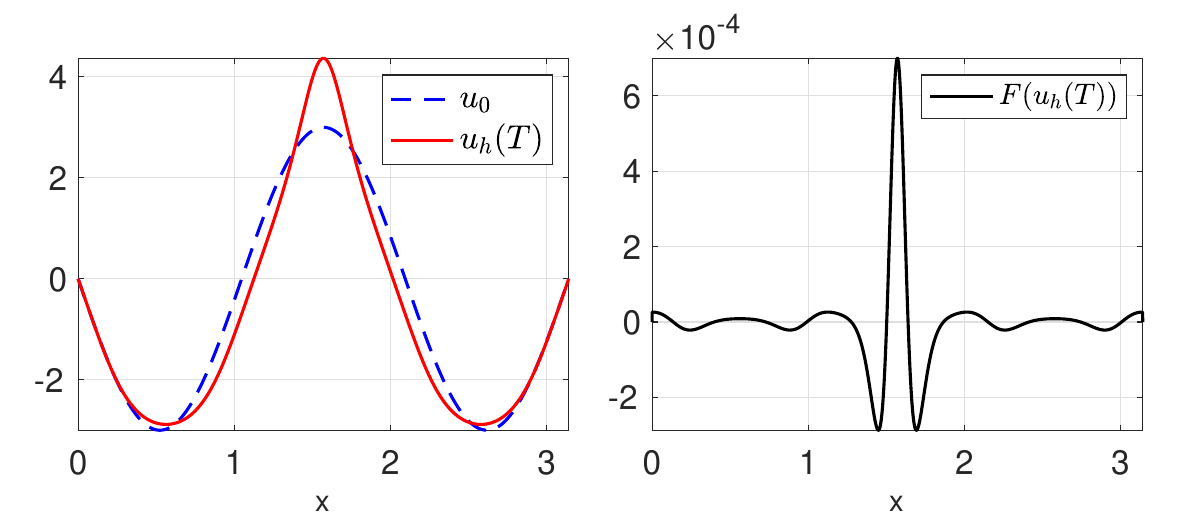}}
\vspace{-0.1in} 
\subfloat[$u_0=0.1x(\pi-x)^2$]{\label{figa3}
    \includegraphics[width=0.4\linewidth]{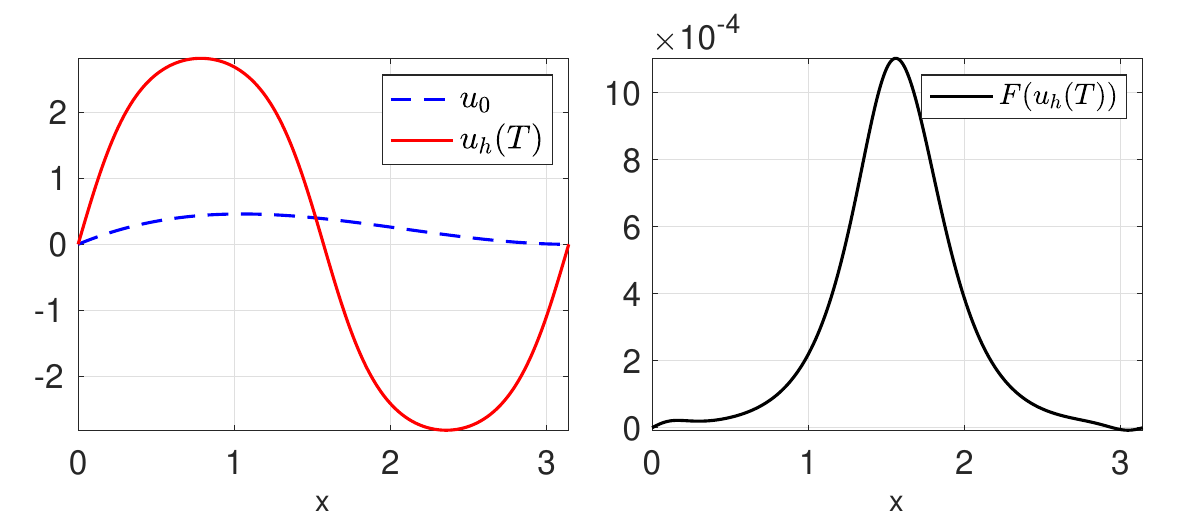}}
 \hspace{0.2in}
\subfloat[$u_0=3\sin x + \sin 3x$]{\label{figa4}
    \includegraphics[width=0.4\linewidth]{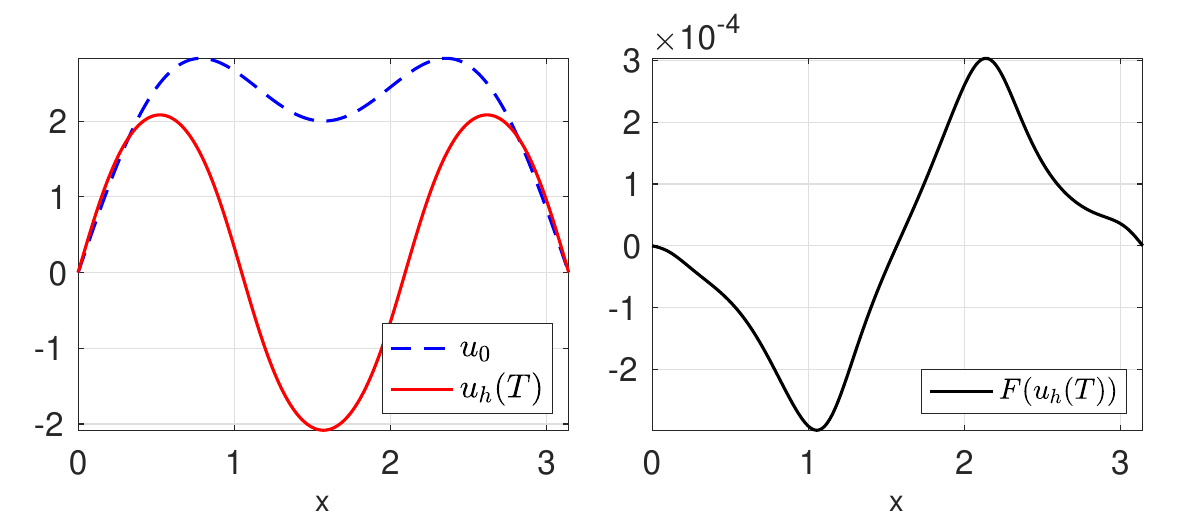}}
\caption{Plots of $u_h(T)$ and $F(u_h(T))$ for Example 1.}
\label{fig1}
\end{figure}
\vspace{-0.2in} 
%
%

To test the convergence rates, the reference solution is computed with $\tau=10^{-4}$ and $h=\pi/10000$. We fix $\tau =10^{-4}$ and select different $h$ to test the spatial convergence, and we fix $h=\pi/10000$ and select different $\tau$ when testing the temporal convergence. Tables \ref{conv1}--\ref{conv2} demonstrate the first and second order accuracy in time and space, respectively, confirming theoretical predictions. 

\vspace{-0.2in}

\begin{table}[H]
\caption{Accuracy tests for Example 1 case (a).}
\centering
\footnotesize{\begin{tabular}{ccccc|ccccc}
\hline
$\tau$ & $\text{Err}(u)$ & rate   & $\text{Err}(v)$ & rate & $h$ & $\text{Err}(u)$ & rate   & $\text{Err}(v)$ & rate\\\hline
$1.6\times 10^{-2}$  & 7.55e-03  &      &   2.42e-03 & & $\pi/200$  & 3.23e-05  &      &   2.06e-05 & \\
$8\times 10^{-3}$    & 3.95e-03	 & 0.93 &   1.27e-03 & 0.93 & $\pi/400$  & 8.06e-06  & 2.00 &   5.13e-06 & 2.00\\
$4\times 10^{-3}$    & 1.96e-03  & 1.01 &   6.29e-04 & 1.01 & $\pi/800$  & 2.00e-06  & 2.01 &   1.28e-06 & 2.01\\ 
$2\times 10^{-3}$    & 9.55e-04  & 1.04 &   3.06e-04 & 1.04 & $\pi/1600$ & 4.91e-07  & 2.03 &   3.13e-07 & 2.03\\ \hline
\end{tabular}}
\label{conv1}
\end{table}

\vspace{-0.3in}
\begin{table}[H]
 \caption{Accuracy tests for Example 1 case (c).}
 \centering
\footnotesize{\begin{tabular}{ccccc|ccccc}
\hline
$\tau$ & $\text{Err}(u)$ & rate   & $\text{Err}(v)$ & rate & $h$ & $\text{Err}(u)$ & rate   & $\text{Err}(v)$ & rate\\\hline
$1.6\times 10^{-2}$  & 8.76e-04  &      &   1.92e-03 & & $\pi/200$  & 9.45e-06  &      &    1.88e-05 & \\
$8\times 10^{-3}$    & 4.31e-04	 & 1.02 &   9.64e-04 & 0.99 & $\pi/400$  & 2.38e-06  & 1.99 &   4.72e-06 & 1.99\\
$4\times 10^{-3}$    & 2.14e-04  & 1.01 &   4.91e-04 & 0.97 & $\pi/800$  & 6.02e-07  & 1.98 &   1.18e-06 & 2.00\\ 
$2\times 10^{-3}$    & 1.06e-04  & 1.01 &   2.45e-04 & 1.00 & $\pi/1600$ & 1.54e-07  & 1.97 &   2.96e-07 & 2.00\\ \hline
\end{tabular} } 
\label{conv2}
\end{table}

\textbf{Example 2: Two-dimensional case.} We consider model (\ref{elliptic}) with $d=2$ and $f(u)=u^3$ or $u^5$, which corresponds to the Lane-Emden equation. We test the performance of the scheme (\ref{app}) under $\tau=10^{-3}$, $h=\pi/200$, $v_0=\sin x_1 \sin x_2$ and different $f(u)$ in Fig. \ref{fig2}, which shows that the method successfully converges to the target saddle point. The $\|F(u_h(T))\|_{l^\infty}$ for cases (a)--(b) are $[5.13, 9.67]\times 10^{-3}$, respectively,  and the first two smallest eigenvalues of the Hessian at $u_h(T)$ are $[-2.09,2.09]$ for case (a) and $[-4.66, 1.73]$ for case (b), indicating that the scheme (\ref{app})  preserves the indexes of saddle points. 

\vspace{-0.1in}
\begin{figure}[H]
\centering
\subfloat[$f(u)=u^3$ and $u_0=\sin x_1 \sin x_2$]{
		\includegraphics[width=0.8\linewidth]{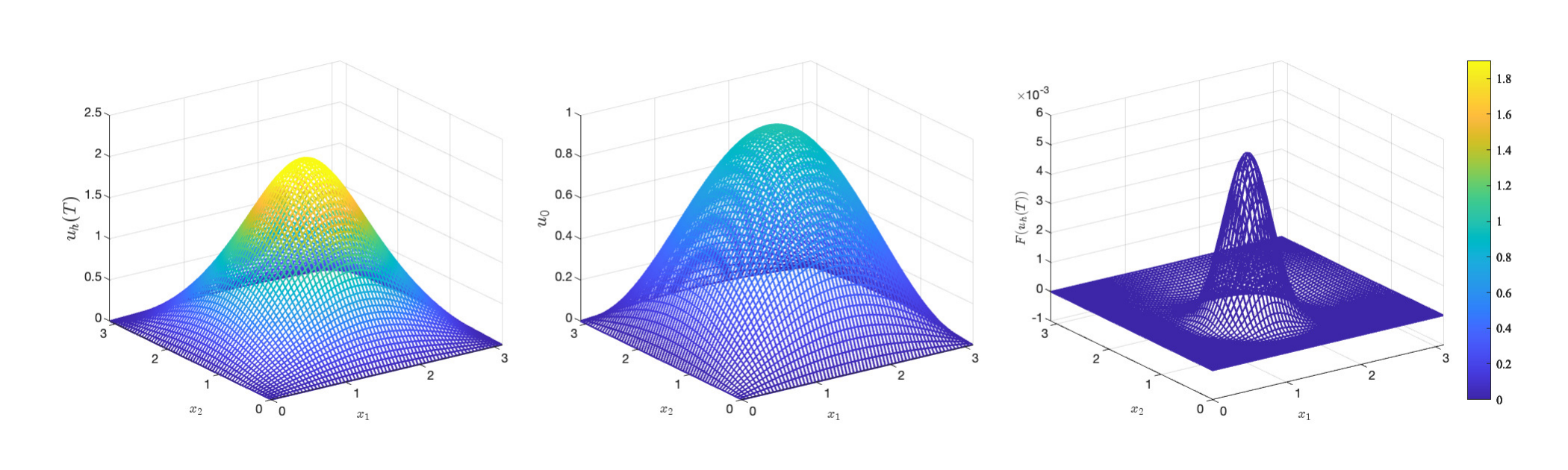}}\\
\vspace{-0.15in}
\subfloat[$f(u)=u^5$ and $u_0=x_1(\pi-x_2)\tan \frac{(x_1-\pi)}{4}\sin 4x_2$]{
		\includegraphics[width=0.8\linewidth]{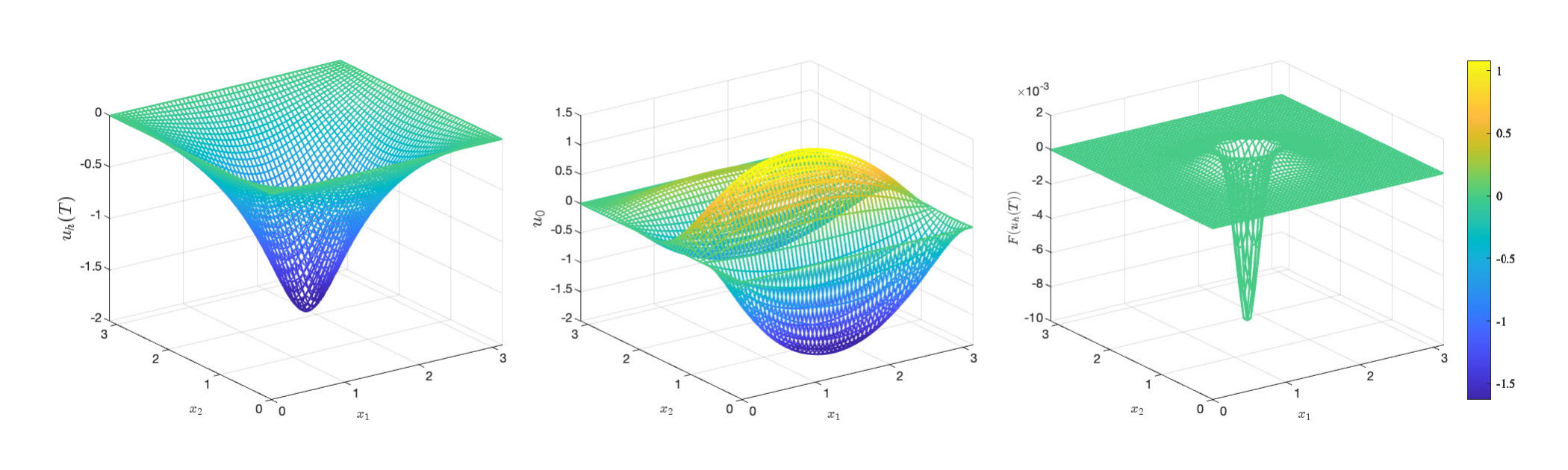}}
\caption{Plots of $u_h(T)$ and $F(u_h(T))$ for Example 2.}
\label{fig2}
\end{figure}

\vspace{-0.4in}

\section{Concluding remarks}
In this paper we analyze the I-1 SD for locating transition states of the semilinear elliptic problem, which takes the form of a coupled parabolic system. In particular, we prove the well-posedness, $H^1$ stability, and error estimates of semi- and fully-discrete finite element schemes. Naturally, we may consider extending the current work to the index-$k$ case, which turns to study a coupled system with the state variable $u$ and auxiliary variables $\{v_1,\cdots,v_k\}$ that satisfy orthonormal conditions. Due to the more complicated coupling and constraints, a substantial modification is required with enhanced ideas and techniques. We will address this interesting topic in the subsequent work.

\vspace{-0.1in}

\end{document}